\newcommand{\incircle}{\raisebox{1pt}{\textcircled{\raisebox{-.9pt} {r}}}}
\newcommand{\iincircle}{\raisebox{1pt}{\textcircled{\raisebox{-.9pt} {i}}}}
\newcommand{\ccc}{{\rm c}}
\DeclareMathAlphabet{\mathpzc}{OT1}{pzc}{m}{it}
\newtheorem{theorem}{Theorem}[section]
\newtheorem{corollary}[theorem]{Corollary}
\newtheorem{lemma}[theorem]{Lemma}
\newtheorem{proposition}[theorem]{Proposition}
\newtheorem{thm}{Theorem}[section]
\theoremstyle{definition}
\newtheorem{definition}[thm]{Definition}
\newtheorem*{remark}{Remark}
\begin{document}

 \author{Mohammed Larbi  Labbi}
 \title[Algebraic Identities]{ On some algebraic identities and the exterior product of double forms}
   \date{}
\subjclass[2010]{Primary 53B20, 15A75; Secondary  15A24, 15A63.}
\keywords{Cayley-Hamilton theorem, cofactor, characteristic coefficients,  Laplace expansion, Newton identities, Jacobi's formula, double form, Newton transformation, exterior product, Gauss-Bonnet theorem. }
\thanks{This research is funded by the Deanship of Scientific Research at the University of Bahrain ref. 3/2010.}
\begin{abstract} We use the exterior product of double forms to reformulate celebrated classical results of linear algebra about matrices and bilinear forms namely Cayley-Hamilton theorem, Laplace expansion of the determinant, Newton identities and Jacobi's formula for the determinant. This new formalism is then used to naturally  generalize the previous results higher multilinear forms namely  to  double forms. \\
 In particular, we show that the Cayley-Hamilton theorem once applied to the second fundamental form of a hypersurface is equivalent to a linearized version of the Gauss-Bonnet theorem, and once its  generalization is applied to the Riemann curvature tensor (seen as a  $(2,2)$ double form) is an infinitisimal version of the  general Gauss-Bonnet-Chern  theorem. In addition to that,  the general Cayley-Hamilton theorems generate  several universal curvature identities. The generalization of the classical Laplace expansion of the determinant  to double forms is shown to lead to new  general Avez type formulas for all Gauss-Bonnet  curvatures. 
\end{abstract}
   \maketitle

\tableofcontents
\section{Preliminaries: The Algebra of Double Forms}\label{prem.}
\subsection{Definitions and basic properties}
For the convenience of the reader, we start by recalling some  basic facts about  the exterior product of double forms. For further study  and for the proofs,  the reader is  invited to consult \cite{Kulkarni, Labbidoubleforms}.\\
Let $(V,g)$ be an Euclidean real vector space  of dimension n. In the
 following
we shall identify whenever convenient (via their Euclidean structures),
the vector spaces
 with their duals. Let
  $\Lambda V^{*}=\bigoplus_{p\geq 0}\Lambda^{p}V^{*}$  (resp.
   $\Lambda V=\bigoplus_{p\geq 0}\Lambda^{p}V$) denotes the exterior algebra
 of $V^* $ (resp.   $V$). Considering  tensor products,
  we define the space of double forms as
 $${\mathcal D}= \Lambda V^{*}\otimes \Lambda V^{*}=\bigoplus_{p,q\geq 0}
  {\mathcal D}^{p,q},$$
 where $  {\mathcal D}^{p,q}= \Lambda^{p}V^{*} \otimes  \Lambda^{q}V^{*}.$
The space ${\mathcal D}$ is naturally  a bi-graded associative  algebra,
    where 
    for $\omega_1=\theta_1\otimes \theta_2\in { \mathcal D}^{p,q}$ and
    $\omega_2=\theta_3\otimes \theta_4\in  {\mathcal D}^{r,s}$, the multiplication is given  by 
    \begin{equation}
    \label{def:prod}
     \omega_1\omega_2= (\theta_1\otimes \theta_2 )(\theta_3\otimes
     \theta_4)=
    (\theta_1\wedge \theta_3 )\otimes(\theta_2\wedge \theta_4)\in
    {\mathcal D}^{p+r,q+s}.\end{equation}
Where $\wedge$ denotes the standard exterior product on the associative exterior algebra $ \Lambda V^{*}$. It results directly from the definition that  the exterior product is (anti)-commutative in the following sense\\
 \begin{equation}
\omega_1\omega_2=(-1)^{pr+qs}\omega_2.\omega_1.
\end{equation}
A \emph{ $(p,q)$ double form}  is by definition an  element of the tensor product
    $  {\mathcal D}^{p,q}= \Lambda^{p}V^* \otimes \Lambda^{q}V^*$. 
     It can be identified canonically
     with a bilinear form $\Lambda^pV\times\Lambda^qV\rightarrow {\bf R}$, which in turn can be seen as 
     a multilinear form which is skew symmetric in the first $p$-arguments and also
     in the last $q$-arguments.\\
The above multiplication in ${\mathcal D}$  is called the \emph{exterior product of double forms.} 
 It turns out that  the exterior product of two ordinary bilinear forms on $V$
coincides with the celebrated  Kulkarni-Nomizu product of bilinear forms. Furthermore,  the  $k$-th exterior power of a
  bilinear form  $h$ on $V$ is a  double form of degree $(k,k)$ that  is given by the determinant as follows
\begin{equation}
h^k(x_1 \wedge...\wedge x_k,y_1\wedge...\wedge y_k)=k!\det[h(x_i,y_j)].
\end{equation}
In particular, for $h=g$ and for each $1\leq k\leq n$,  $\frac{g^k}{k!}$ coincides with  the canonical inner product on $\Lambda^{k}V$. The former canonical inner
 product extends to an inner product on the exterior algebra of $V$, which in turn can be extended in a natural way to an inner product on the algebra of double forms over $V$. 
The so obtained  inner product of double forms  shall be denoted by $\langle, \rangle$. Explicitely, for $\omega_1=\theta_1\otimes \theta_2\in { \mathcal D}^{p,q}$ and
    $\omega_2=\theta_3\otimes \theta_4\in  {\mathcal D}^{r,s}$, we have 
    \begin{equation}\label{def:innerpdct}
\langle \omega_1, \omega_2 \rangle =\langle \theta_1, \theta_3 \rangle \langle \theta_2, \theta_4 \rangle =\frac{g^k}{k!} ( \theta_1^\sharp, \theta_3^\sharp)    \frac{g^k}{k!} ( \theta_2^\sharp, \theta_4^\sharp).
    \end{equation}
 Where   $ \theta_i^\sharp$  denotes the  $p$-vector dual to the $p$-form  $\theta_i$.\\
Recall that the (Ricci) contraction map, denoted by $\ccc$,  maps  ${\mathcal D}^{p,q}$ into  ${\mathcal D}^{p-1,q-1}$. For a double form  $\omega \in  {\mathcal D}^{p,q}$  with $p\geq 1$ and $q\geq 1$, we  have
$$\ccc \omega(x_1\wedge...\wedge x_{p-1},y_1\wedge...\wedge y_{q-1})=
\sum_{j=1}^{n}\omega(e_j\wedge x_1\wedge... x_{p-1},
e_j\wedge y_1\wedge...\wedge y_{q-1})$$
where $\left\{e_1,...,e_n\right\}$ is an arbitrary  orthonormal basis of $V$ and $\omega$ is considered as a bilinear form as explained above.\\
It turns out that the contraction map $\ccc$ on ${\mathcal D}$  is  the adjoint of  the multiplication map by the metric  $g$ of $V$,  precisely we have for $\omega_1, \omega_2 \in  {\mathcal D}$ the following \cite{Labbidoubleforms}
\begin{equation}\label{adj:gc}
<g\omega_1,\omega_2>=<\omega_1,\ccc \omega_2>.
\end{equation}
Suppose now that  we have fixed an orientation on the vector space  $V$.
The classical
Hodge star operator  $*:\Lambda^{p}V\rightarrow \Lambda^{n-p}V$ can be extended naturally to
 operate 
on double forms as follows. For a $(p,q)$-double form $\omega$ (seen as a bilinear form), $*\omega$  is  the  $(n-p,n-q)$-double form  given by
\begin{equation}
 *\omega(.,.)=(-1)^{(p+q)(n-p-q)}\omega(*.,*.).
\end{equation}
Note that $*\omega$ does not depend on the chosen orientation as the usual Hodge star
 operator is applied twice.
 The so-obtained  operator is still called the Hodge star operator operating on double forms or the \emph{double Hodge star operator}. This new operator  provides another simple  relation between the contraction map $c$ of double forms 
and the multiplication map by the metric as follows \cite{Labbidoubleforms}
\begin{equation}
\label{A}
g\omega=*\ccc *\omega\, \, {\rm and}\, \,  \ccc \omega =*g*\omega.
\end{equation}
Furthermore, the double Hodge star operator generates the inner product of double forms  as follows.  For any two double forms $\omega,\theta\in {\mathcal D}^{p,q}$ we have
\begin{equation}\label{B}
<\omega,\theta>=*\bigl(\omega(*\theta)\bigr)=(-1)^{(p+q)(n-p-q)}*\bigl((*\omega)\theta\bigr).
\end{equation}
Finally, let us recall that a double form  $\omega\in {\mathcal D}^{p,q}$ is said to  satisfy the first Bianchi if
$$\sum_{j=1}^{p+1}(-1)^j\omega(x_1\wedge...\wedge \hat{x}_j\wedge ... x_{p+1},x_j\wedge y_1\wedge...\wedge y_{q-1})=0.$$
For all vectors $x_1,...,x_{p+1},y_1,...,y_p$ in $V$  and where $\hat{}$ denotes omission.\\
It turns out that the exterior product of two double forms satisfying the first Bianchi iidentity is a double form that satisfies the first Bianchi identity as well \cite{Kulkarni}.
For  a double form $\omega$ of degree $(p,p)$ that satisfies the first Bianchi identity, we have the following useful relations \cite{Labbidoubleforms}
\begin{equation}\label{star:form}
{1\over (k-p)!}*(g^{k-p}\omega)=\sum_{r={\max\{0,p-n+k\}}}^p
{(-1)^{r+p}\over r!}{g^{n-k-p+r}\over (n-k-p+r)!}\ccc^r\omega
\end{equation}
Where $1\leq p\leq k\leq n$. In particular for $k=n$ and $k=n-1$ respectively, we have
\begin{equation}\label{nn-1:star}
*({g^{n-p}\omega\over (n-p)!})={1\over p!}\ccc^p\omega\quad {\rm and}\,
*({g^{n-p-1}\omega\over (n-p-1)!})={\ccc^p\omega\over p!} g-
{\ccc^{p-1}\over (p-1)!}\omega.
\end{equation}
\subsection{ The algebra of double forms vs. Mixed exterior algebra}
The algebra of double forms was considered and studied since the sixties of the last century exclusively by geometers. However, as was pointed out recently  by  Jammes in his habilitation thesis \cite{Jammes}, it seems that  geometers  ignore   that many algebraic aspects of this algebra was indirectly and  independently  studied   by  Greub in the sixties and later by Greub and  Vanstone,  under the name of Mixed exterior algebra, see for instance \cite{Greub-book, Vanstone}. The first edition of Greub's book \cite{Greub-book}  appeared in 1967.  The author just came to know about Greub and Vanstone's contributions and  only after he finishes the first version of this paper.  Let us recall here the basic definition of this "dual"  algebra, a  report on Greub and Vanstone  contributions will appear in a forthcoming paper.\\
Let $V$ and $V^*$  be two dual vector spaces, denote by  $\Lambda^p  V$ and  $\Lambda^q V^{*}$  their exterior powers respectively. Consider the tensor product  $$\Lambda^p_q ( V, V^*)= \Lambda^p  V\otimes \Lambda^q V^{*},$$
An element in $\Lambda^p_q ( V, V^*)$ is called  a $(p,q)$-vector (the analogous of a $(p,q)$-double form). Next define the mixed exterior algebra as the tensor product
 $$\Lambda ( V, V^*)=\Lambda V\otimes \Lambda V^{*}=\bigoplus_{p,q\geq 0} \Lambda^p_q ( V, V^*).$$
Where the multiplication is denoted by a wedge and  given by
$$(u\otimes u^*)\wedge (v\otimes v^*)=(u\wedge v)\otimes (u^*\wedge v^*).$$
The space $\Lambda ( V, V^*)$ is  isomorphic to the space of linear endomorphisms of $\Lambda ( V)$. Greub and Vanstone introduced then a second product, which they call the composition product,   in the algebra $\Lambda ( V, V^*)$ by just  pulling back the standard composition operation on endomorphisms. They  proved useful identities between the two products on $\Lambda ( V, V^*)$. These identities were then used  to obtain several  matrix-free proofs of classical theorems about linear transformations similar to the ones that we  prove here in this paper  in section 2.  However, let us emphasize here that the results of the remaining sections of our paper are original and the corresponding cases were not discussed by Greub and Vanstone.
\subsection{The composition product of double forms}
Following Greub \cite{Greub-book}, we define a second multiplication in the space of double forms ${\mathcal D}$ which will be denoted by $\circ$ and will be called the \emph{composition product} or \emph{Greub's product} of double forms. Given 
 $\omega_1=\theta_1\otimes \theta_2\in { \mathcal D}^{p,q}$ and
    $\omega_2=\theta_3\otimes \theta_4\in  {\mathcal D}^{r,s}$, set
    \begin{equation}
\omega_1\circ\omega_2=(\theta_1\otimes \theta_2)\circ (\theta_3\otimes \theta_4)=\langle \theta_1,\theta_4\rangle \theta_3\otimes \theta_2\in  {\mathcal D}^{r,q}.
\end{equation}
It is clear that $\omega_1\circ\omega_2=0$ unless $p=s$.\\
This product can be interpreted in the following way: Denote by $\bar\omega_1:\Lambda^p  V\rightarrow \Lambda^q  V$   the linear map corresponding to the bilinear map  $\omega_1\in  {\mathcal D}^{p,q}$, and by  $\bar\omega_2:\Lambda^r  V\rightarrow \Lambda^p  V$   the linear map corresponding to the double form  $\omega_2\in  {\mathcal D}^{r,p}$. Then it turns out that the composition map $\bar\omega_1\circ\bar\omega_2:\Lambda^r  V\rightarrow \Lambda^q  V$ is nothing but the linear map corresponding to the double form $\omega_1\circ\omega_2\in  {\mathcal D}^{r,q}$. The space of double forms endowed with the composition product  $\circ$ is then an associative algebra.\\
We are going now to write an explicit useful formula for this new product.  Let  $u_1\in \Lambda^r$  be an $r$-vector and $u_2\in \Lambda^q$ a $q$-vector in $V$  then

\begin{equation*}
\begin{split}
\omega_1\circ\omega_2(u_1,u_2)=& \langle \bar{\omega}_1\circ \bar{\omega}_2 (u_1), u_2 \rangle\\
=& \langle \bar\omega_1(\bar\omega_2(u_1)),u_2\rangle\\
=& \omega_1(\bar\omega_2(u_1),u_2)\\
=&\sum_{\scriptstyle i_1<i_2<...<i_p}\omega_2(u_1,e_{i_1}\wedge ...\wedge e_{i_p})\omega_1(e_{i_1}\wedge ...\wedge e_{i_p},u_2).
\end{split}
\end{equation*}
Where $\{e_1,...,e_n\}$ is an arbitrary orthonormal basis of $(V,g)$. For a double form $\omega \in  {\mathcal D}^{p,q}$, we denote by $\omega^t\in  {\mathcal D}^{q,p}$ the transpose of $\omega$, that is 
\begin{equation}
\omega^t(u_1,u_2)=\omega(u_2,u_1).
\end{equation} 
In particular, $\omega$ is a symmetric double form if and only if $\omega^t=\omega$. The previous calculation shows that
\begin{equation}\label{compo-product}
\omega_1\circ\omega_2(u_1,u_2)=\sum_{\scriptstyle i_1<i_2<...<i_p}\omega_2^t(e_{i_1}\wedge ...\wedge e_{i_p},u_1)\omega_1(e_{i_1}\wedge ...\wedge e_{i_p},u_2).
\end{equation}
Consequently, we obtain another useful formula for the inner product of double forms as follows
\begin{proposition}
The inner product of two double forms $\omega_1,\omega_2\in  {\mathcal D}^{p,q}$ is the full contraction of the product $\omega_1^t\circ\omega_2$ or $\omega_2^t\circ\omega_1$ , precisely we have
\begin{equation}
\langle \omega_1,\omega_2\rangle=\frac{1}{p!}\ccc^p(\omega_2^t\circ\omega_1)=\frac{1}{p!}\ccc^p(\omega_1^t\circ\omega_2.)
\end{equation}
\end{proposition}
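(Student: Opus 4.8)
The plan is to compute both sides in a fixed orthonormal basis $\{e_1,\dots,e_n\}$ and to recognize the common value as the coordinate expansion of the inner product. The conceptual content is that, under the correspondence $\omega\mapsto\bar\omega$ between double forms and endomorphisms recalled above, the full contraction of a $(p,p)$ double form is a multiple of the trace of its associated endomorphism, while $\langle\,,\,\rangle$ is the Hilbert--Schmidt inner product of the associated endomorphisms. Concretely, I would first establish the contraction--trace identity: for every $\alpha\in{\mathcal D}^{p,p}$,
\[
\ccc^p\alpha=\sum_{j_1,\dots,j_p}\alpha(e_{j_1}\wedge\cdots\wedge e_{j_p},\,e_{j_1}\wedge\cdots\wedge e_{j_p})=p!\sum_{i_1<\cdots<i_p}\alpha(e_{i_1}\wedge\cdots\wedge e_{i_p},\,e_{i_1}\wedge\cdots\wedge e_{i_p}).
\]
The first equality follows by expanding the definition of $\ccc$ once for each of the $p$ contractions; the second holds because terms with a repeated index vanish by skew-symmetry, and each strictly increasing multi-index $i_1<\cdots<i_p$ arises from exactly $p!$ orderings, all giving the same value since reordering both arguments by the same permutation contributes $(\operatorname{sgn})^2=1$.

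Next I would specialize to $\alpha=\omega_2^t\circ\omega_1\in{\mathcal D}^{p,p}$ and evaluate its diagonal entries using the explicit composition formula (\ref{compo-product}). Taking the outer factor to be $\omega_2^t\in{\mathcal D}^{q,p}$ and the inner factor $\omega_1\in{\mathcal D}^{p,q}$, the summation index in (\ref{compo-product}) ranges over $q$-multi-indices, and after using $\omega_k^t(e_I,u)=\omega_k(u,e_I)$ one obtains, for any $p$-multi-index $J$,
\[
(\omega_2^t\circ\omega_1)(e_J,e_J)=\sum_{|I|=q}\omega_1(e_J,e_I)\,\omega_2(e_J,e_I).
\]
Feeding this into the contraction--trace identity yields
\[
\frac{1}{p!}\ccc^p(\omega_2^t\circ\omega_1)=\sum_{|J|=p}\sum_{|I|=q}\omega_1(e_J,e_I)\,\omega_2(e_J,e_I).
\]

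It remains to identify the right-hand side with $\langle\omega_1,\omega_2\rangle$. Since $\tfrac{g^p}{p!}$ and $\tfrac{g^q}{q!}$ are the canonical inner products on $\Lambda^pV$ and $\Lambda^qV$, the family $\{e_J\otimes e_I:|J|=p,\,|I|=q\}$ indexed by strictly increasing multi-indices is orthonormal in ${\mathcal D}^{p,q}$; hence by (\ref{def:innerpdct}) the double sum above is exactly the expansion of $\langle\omega_1,\omega_2\rangle$ in that basis, giving the first equality. The second equality is immediate: interchanging $\omega_1$ and $\omega_2$ replaces $\omega_2^t\circ\omega_1$ by $\omega_1^t\circ\omega_2$ but leaves the symmetric scalar $\sum_{J,I}\omega_1(e_J,e_I)\omega_2(e_J,e_I)$ unchanged. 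The one delicate point is the bookkeeping in the contraction--trace identity---tracking the factorial $p!$ and the vanishing of the repeated-index terms; once that is in place the rest is a direct substitution.
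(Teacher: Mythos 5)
Your proof is correct and follows essentially the same route as the paper's: expand $\ccc^p$ over an orthonormal basis (picking up the factor $p!$ by restricting to increasing multi-indices), apply the composition formula to the diagonal entries, and recognize the resulting double sum as the coordinate expansion of $\langle\omega_1,\omega_2\rangle$. Your bookkeeping is in fact slightly cleaner, since you correctly note that the inner summation runs over $q$-multi-indices, where the paper's displayed computation writes $j_1<\cdots<j_p$.
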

\begin{proof}
It is straightforward as follows:
\begin{equation*}
\begin{split}
\ccc^p(\omega_2^t\circ\omega_1) &=\sum_{\scriptstyle i_1,i_2,...,i_p}\omega_2^t\circ \omega_1(e_{i_1}\wedge ...\wedge e_{i_p};e_{i_1}\wedge ...\wedge e_{i_p})\\
&=p!\sum_{\scriptstyle i_1<i_2<...<i_p}\omega_2^t\circ \omega_1(e_{i_1}\wedge ...\wedge e_{i_p};e_{i_1}\wedge ...\wedge e_{i_p})\\
&=p!\sum_{\scriptstyle i_1<i_2<...<i_p\atop \scriptstyle  j_1<j_2<...<j_p} \omega_1^t(e_{j_1}\wedge ...\wedge e_{j_p};e_{i_1}\wedge ...\wedge e_{i_p})\omega_2^t(e_{j_1}\wedge ...\wedge e_{j_p};e_{i_1}\wedge ...\wedge e_{i_p})\\
&=p!\sum_{\scriptstyle i_1<i_2<...<i_p\atop \scriptstyle  j_1<j_2<...<j_p} \omega_1(e_{i_1}\wedge ...\wedge e_{i_p};e_{j_1}\wedge ...\wedge e_{j_p})\omega_2(e_{i_1}\wedge ...\wedge e_{i_p};e_{j_1}\wedge ...\wedge e_{j_p})\\
&=p!\langle \omega_1,\omega_2\rangle.
\end{split}
\end{equation*}
\end{proof}
\begin{remark}
The inner product used by Greub in his mixed exterior algebra is the pairing product which is always  non-degenerate but not positive definite in general, hence  it is different from the above inner product. The two inner products  coincide only for symmetric double forms. For  two general double forms $\omega_1,\omega_2\in  {\mathcal D}^{p,q}$,  their  pairing product  is the full contraction of the  product double form  $\omega_1\circ\omega_2$.
\end{remark}
Following Greub \cite{Greub-book}, in what follows  we shall denote the $k$-th power of a double form $\omega$  in the composition algebra  by $\omega^{ \incircle}$, that is 
$$\omega^{ \incircle}=\underbrace{\omega\circ ... \circ \omega}_\text{$r$-times}.$$

\section{Euclidean invariants for  bilinear forms}\label{first-section}
\subsection{characteristic coefficients of bilinear forms}
Let $A$ be a square matrix with real  entries of size $n$. Recall that the characteristic polynomial $\chi_A(\lambda)$ of $A$ is given by
\begin{equation}\label{skdef}
\chi_{A} (\lambda)=\det(A-\lambda I)=(-1)^n\lambda^n+(-1)^{n-1}s_1(A)\lambda^{n-1}+...+s_n(A)=\sum_{i=0}^n (-1)^{n-i} s_i(A)\lambda^{n-i}.
\end{equation}
Where $s_1(A)$ is the trace of $A$, $s_n(A)$ is the determinant of $A$ and the other characteristic  coefficients $s_k(A)$  are intermediate invariants of the matrix $A$ that interpolate between the trace and the detreminant, we shall call them here for simplicity the $s_k$ invariants of the matrix $A$. \\
Since similar matrices have the same  characteristic polynomial, therefore they have   as well the same $s_k$ invariants. In particular, one can define these invariants in an invariant way for endomorphisms.\\
Let $(V,g)$ be an Euclidean vector space of finite dimension $n$ and  $h$ be a  bilinear form on $V$. We denote by $\bar{h}$  the linear operator on $V$  that coresponds to $h$  via the inner product $g$.\\
We define  the $s_k$ invariants of the bilinear form  $h$ to be those of the linear operator $\bar{h}$. In particular, the determinant of the bilinear form $h$ is by definition the determinant of the linear operator $\bar{h}$.  Note that in contrast with $s_k(\bar{h})$, the invariants $s_k(h)$ depend on  the inner product $g$. In order to make this dependence explicit we shall use the exterior product of double forms .\\
Recall that  for $1\leq k\leq n$, the exterior $k$-th power $h^k=h...h$  of the  bilinear form $h$  (seen here as a $(1,1)$-double form)  is a $(k,k)$ double form determined by the determinant as follows:
\begin{equation}\label{hkdet}
h^k(x_1,..., x_k; y_1,..., y_k)=k!\det[h(x_i,y_j)].
\end{equation}
In particular, if $\{e_1,...,e_n\}$ is an orthonormal basis of $(V,g)$ then then  we have
\[h^n(e_1,...,e_n;e_1,...,e_n)=n!\det h.\]
Alternatively, this can be written as
\begin{equation}\label{char-sn}
h^n(*1,*1)=n!\det h, \,\, {\rm{or}}\,\, 
\det h=*\frac{h^n}{n!}.
\end{equation}
Where $*$ denotes  the  (double) Hodge star operator  as in the previous section.
 Consequently, using the binomial formula, the characteristic polynomial of $h$ takes the form
\begin{equation}
\begin{split}
\chi_{h}(\lambda)=&\det(h-\lambda g)=*\frac{(h-\lambda g)^n}{n!}\\
=&*\frac{1}{n!}\sum_{i=0}^n\binom{n}{i}h^i(-1)^{n-i}\lambda^{n-i}g^{n-i}\\
=& \frac{1}{n!}\sum_{i=0}^n n!(-1)^{n-i} \left(*\frac{g^{n-i} h^i}{(n-i)!i!}\right)\lambda^{n-i}\\
=&\sum_{i=0}^n (-1)^{n-i} s_i(h)\lambda^{n-i}.
\end{split}
\end{equation}
We have  therefore proved the  following simple formula for all the $s_k$ invariants of $h$:
\begin{proposition}\label{sk-prop}
For each $1\leq k\leq n$, the  $s_k$ invariant of $h$ is given by
\begin{equation}\label{skformula}
s_k(h)= \frac{1}{k!(n-k)!}*(g^{n-k}h^k).
\end{equation}
Where $*$ denotes
 the  (double) Hodge star operator operating on double forms, and the products $g^{n-k}, h^k,g^{n-k}h^k$  are exterior products of double forms, where $g$ and $h$ are considered as $(1,1)$-double forms.
\end{proposition}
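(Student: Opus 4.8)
The plan is to read off the formula for $s_k(h)$ by expanding the characteristic polynomial $\chi_h(\lambda)$ in two different ways and comparing the coefficients of $\lambda^{n-k}$. The starting observation is that the determinant of a bilinear form admits the Hodge-star expression $\det h = *\frac{h^n}{n!}$ recorded in \eqref{char-sn}. Since $h-\lambda g$ is again a bilinear form, i.e.\ a $(1,1)$-double form, I would apply that same formula to it and write $\chi_h(\lambda)=\det(h-\lambda g)=*\frac{(h-\lambda g)^n}{n!}$.

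Next I would expand $(h-\lambda g)^n$ by the binomial theorem inside the exterior algebra of double forms. The step that genuinely needs justification is that the binomial theorem is available here: in general the exterior product of double forms is only graded-commutative, but $g$ and $h$ are both of bidegree $(1,1)$, so their commutation sign is $(-1)^{1\cdot 1+1\cdot 1}=+1$ and they commute. This is the one point I would check carefully; once it is in hand, the expansion $(h-\lambda g)^n=\sum_{i=0}^n\binom{n}{i}(-1)^{n-i}\lambda^{n-i}g^{n-i}h^i$ is legitimate.

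Finally, using that the double Hodge star is linear, I would apply it term by term, rewrite $\frac{1}{n!}\binom{n}{i}=\frac{1}{i!(n-i)!}$, and obtain $\chi_h(\lambda)=\sum_{i=0}^n(-1)^{n-i}\bigl(\frac{1}{i!(n-i)!}*(g^{n-i}h^i)\bigr)\lambda^{n-i}$. Matching this against the defining expansion $\chi_h(\lambda)=\sum_{i=0}^n(-1)^{n-i}s_i(h)\lambda^{n-i}$ coming from \eqref{skdef} (applied to the operator $\bar h$), and equating the coefficients of $\lambda^{n-k}$, yields $s_k(h)=\frac{1}{k!(n-k)!}*(g^{n-k}h^k)$, as claimed. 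The only real obstacle is the commutativity justification above; the rest is routine bookkeeping with binomial coefficients and the linearity of $*$.
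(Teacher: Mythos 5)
Your proposal is correct and follows essentially the same route as the paper: the author likewise writes $\chi_h(\lambda)=\det(h-\lambda g)=*\frac{(h-\lambda g)^n}{n!}$, expands by the binomial formula, and reads off the coefficients against \eqref{skdef}. Your explicit check that $g$ and $h$ commute because the sign $(-1)^{pr+qs}$ equals $+1$ for two $(1,1)$-forms is a point the paper leaves implicit, but it is exactly the right justification.
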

In particular, the trace and determinant of $h$  are given by
\begin{equation*}
s_1(h)=*\left\{\frac{g^{n-1}h}{(n-1)!}\right\} \, \, 
{\rm and}\,\,  s_n(h)=*\frac{h^n}{n!}.
\end{equation*}
Let us note  here that for any  orthonormal basis $(e_i)$ of $(V,g)$,  $s_k(h)$ coincides by definition  with the $s_k$ invariant of the  matrix $(h(e_i,e_j))$. In particular, $s_n(h)$ is the determinant of the matrix  $(h(e_i,e_j))$. More generally, we have the following  lemma:
\begin{lemma}
Let  $(e_i), \,  i=1, 2, ... n$, be an  orthonormal basis of $(V,g)$, $h$ a bilinear form on $V$  and  $k+r\leq n$. Then for any subset 
$\{i_1,i_2,...,i_{k+r}\}$   with $k+r$ elements of $\{1,2,...,n\}$ we have
\begin{equation}\label{gkhrsr}
g^kh^r\left(e_{i_1},e_{i_2},...,e_{ i_{k+r}},e_{i_1},e_{i_2},...,e_{ i_{k+r}}\right)=k!r!s_r\left(h(e_{i_a},e_{i_b})\right).
\end{equation}
Where $ s_r\left(h(e_{i_a},e_{i_b})\right)$  is the $s_r$ invariant of the $(k+r)\times (k+r)$ matrix $\left(h(e_{i_a},e_{i_b})\right)$, for $1\leq a,b\leq k+r$,  $g$ is the inner product on $V$. The product $g^kh^r$ is the exterior product of double forms as above.
\end{lemma}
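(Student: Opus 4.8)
The plan is to reduce the identity to Proposition \ref{sk-prop}, applied not to all of $V$ but to the $(k+r)$-dimensional coordinate subspace singled out by the chosen indices. Set $m=k+r$ and let $W=\mathrm{span}\{e_{i_1},\dots,e_{i_m}\}\subset V$, equipped with the restricted inner product $g_W=g|_W$; then $(e_{i_1},\dots,e_{i_m})$ is an orthonormal basis of the Euclidean space $(W,g_W)$. Writing $h_W=h|_{W\times W}$ for the restriction of the bilinear form, the matrix of $h_W$ in this orthonormal basis is exactly $\bigl(h(e_{i_a},e_{i_b})\bigr)_{1\le a,b\le m}$, so by the remark preceding the lemma (that $s_r$ of a bilinear form equals $s_r$ of its matrix in any orthonormal basis) one has $s_r\bigl(h(e_{i_a},e_{i_b})\bigr)=s_r(h_W)$, the invariant now computed intrinsically on $(W,g_W)$.

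First I would apply Proposition \ref{sk-prop} on the Euclidean space $(W,g_W)$ of dimension $m$, taking the invariant index to be $r$, so that the complementary power is $m-r=k$. Formula \eqref{skformula} then gives $s_r(h_W)=\frac{1}{r!\,k!}\,*_W\!\bigl(g_W^{\,k}h_W^{\,r}\bigr)$, where $*_W$ denotes the double Hodge star of $W$. Since $g_W^{\,k}h_W^{\,r}$ is a double form of top bidegree $(m,m)$ on $W$, the same computation that produced \eqref{char-sn} shows that its Hodge star is simply its value on the volume multivector, that is $*_W\!\bigl(g_W^{\,k}h_W^{\,r}\bigr)=\bigl(g_W^{\,k}h_W^{\,r}\bigr)\bigl(e_{i_1}\wedge\cdots\wedge e_{i_m};\,e_{i_1}\wedge\cdots\wedge e_{i_m}\bigr)$. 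Combining the last two relations already produces the factor $k!\,r!$ together with the invariant $s_r$ on the right-hand side of \eqref{gkhrsr}.

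It then remains to identify the quantity computed inside $W$ with the one computed in $V$, i.e.\ to check that $\bigl(g_W^{\,k}h_W^{\,r}\bigr)(e_{i_1}\wedge\cdots\wedge e_{i_m};\,e_{i_1}\wedge\cdots\wedge e_{i_m})=(g^{k}h^{r})(e_{i_1},\dots,e_{i_m};e_{i_1},\dots,e_{i_m})$. This is the one genuine point requiring care, and I expect it to be the main obstacle. The exterior product of double forms, evaluated on a fixed family of vectors, is given by a generalized Laplace expansion whose terms are products of scalar values $g(e_{i_a},e_{i_b})=\delta_{ab}$ and $h(e_{i_a},e_{i_b})$, all arising from the determinant description \eqref{hkdet} of the powers $g^{k}$ and $h^{r}$. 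Because each such value depends only on the pair $(e_{i_a},e_{i_b})$ and is insensitive to whether it is computed in $(W,g_W)$ or in $(V,g)$, the two products agree term by term on these arguments; hence restriction to $W$ commutes with the formation of powers and of the exterior product. Feeding this identification back into the previous step yields $(g^{k}h^{r})(e_{i_1},\dots,e_{i_m};e_{i_1},\dots,e_{i_m})=k!\,r!\,s_r\bigl(h(e_{i_a},e_{i_b})\bigr)$, which is exactly \eqref{gkhrsr}.
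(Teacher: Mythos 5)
Your argument is correct and is exactly the route the paper intends: the paper's entire ``proof'' is the one-line remark that the lemma is a direct consequence of Proposition \ref{sk-prop}, and your restriction to the coordinate subspace $W$, application of formula \eqref{skformula} there, and identification of $*_W$ of a top-degree double form with its value on the volume multivector is the natural way to fill in those details. The compatibility of the exterior product with restriction, which you flag as the delicate point, does hold for the reason you give (the shuffle expansion of the product only involves values $g(e_{i_a},e_{i_b})$ and $h(e_{i_a},e_{i_b})$), so nothing is missing.
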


The proof of the previous lemma is a direct consequence of proposition \ref{sk-prop}. The lemma shows that the  "sectionnal curvatures" of the tensors $g^kh^r$ are precisely the $s_r$ invariants of $h$ once restricted to lower subspaces of $V$. \\
In what follows in this section  we shall  use this formalism of the $s_k$ invariants to reformulate and then generalize  celebrated classical identities for matrices namely Laplace expansion of the determinant, Cayley-Hamilton theorem, Jacobi's formula for the determinant and Newton identities.
\subsection{Cofactor transformation of a bilinear form vs. cofactor matrix}\label{adjugate}
Recall that the  cofactor matrix of a square matrix of size $n$  is a new matrix formed by all the cofactors of the original matrix. Where  the $(ij)$-cofactor of a matrix is $(-1)^{i+j}$ times the  determinant of the $(n-1\times n-1)$ sub-matrix that is obtained by eliminating the $i$-th row and $j$-th column of the original matrix.\\
 We are going to do  here the same transformation but on a bilinear form instead of a matrix. Precisely, let $(e_i)$ be an orthonormal basis of $(V,g)$, for each pair of  indexes $(i,j)$, we define,  the $(ij)$-cofactor   of the bilinear form $h$, denoted $t_{n-1}(h)(e_i,e_j)$,
to be  $(-1)^{i+j}$ multiplied by     the determinant of the  $(n-1\times n-1)$ sub-matrix  that is obtained after removing the $i$-th row and $j$-th column from the matrix  $(h(e_i,e_j))$. Then we can use bilinearity to extend $t_{n-1}(h)$ into a  bilinear form defined on $V$. It  is then natural to call the so obtained bilinear form  the cofactor transformation of $h$. \\
The next proposition shows in particular that the bilinear form $t_{n-1}(h)$ is well defined (that is it does not depend on choice of the orthonormal basis)
\begin{proposition} If 
$*$ denotes the Hodge star operator operating on double forms then
\begin{equation}\label{tkformula} 
t_{n-1}(h)=\frac{1}{(n-1)!}*\bigl(h^{n-1}\bigr).
\end{equation}

\end{proposition}
\begin{proof}
From the  definition of the Hodge star operator once it is acting on double forms  we have
\begin{equation}
\frac{1}{(n-1)!}*\bigl(h^{n-1}\bigr)(e_i,e_j)=\frac{1}{(n-1)!}\bigl(h^{n-1}\bigr)(*e_i,*e_j).
\end{equation}
The last expression is by formula (\ref{hkdet}) exactly equal to $(-1)^{i+j}$   multiplied by  the determinant of the $(n-1)\times (n-1)$ sub-matrix  that is obtained from $(h(e_i,e_j))$  by removing the $i$-th row and $j$-th column. This completes the proof of the proposition.
\end{proof}
We define now higher cofactor transformations of $h$ as follows
\begin{definition}
For $0\leq k\leq n-1$ we define the $k$-th cofactor of $h$  (called also the $k$-th Newton transformation of $h$) to be the bilinear form given by
\begin{equation}\label{tkformula}
t_k(h)=\frac{1}{k!\bigl(n-1-k\bigr)!}*\bigl(g^{n-1-k}h^k\bigr).
\end{equation}
\end{definition}
Note that $t_0(h)=g$ is the metric, $t_{n-1}(h)$ coincides with the above defined cofactor transformation of $h$. The terminology  ``higher  cofactor"  is motivated by the following fact 
\begin{equation*}
t_k(h)(e_i,e_j)=\frac{1}{k!\bigl(n-1-k\bigr)!}\bigl(g^{n-1-k}h^k\bigr)(*e_i,*e_j).
\end{equation*}
The value $\frac{1}{k!\bigl(n-1-k\bigr)!}\bigl(g^{n-1-k}h^k\bigr)(*e_i,*e_j)$ can be seen, like in the case $k=n-1$ above, as $(-1)^{i+j}$   multiplied by  a kind of higher determinant  of the $(n-1)\times (n-1)$ sub-matrix  that is obtained from $(h(e_i,e_j))$  by removing the $i$-th row and $j$-th column.
\begin{remark}
In view of formula  \ref{gkhrsr} it is tempting to think that the  ``higher determinants"   $\frac{1}{k!\bigl(n-1-k\bigr)!}\bigl(g^{n-1-k}h^k\bigr)(*e_i,*e_j)$  coincide with the $s_k$ invariant of the corresponding matrix. However it turns out that this not the case in general even when $h$ is symmetric.
\end{remark}
Recall that the $s_k$ invariants of a bilinear form coincide with the coefficients of the characteristic polynomial of $h$. A similar property holds for the $t_k$ invariants as follows
\begin{proposition}
For each $1\leq i\leq n-1$, the higher cofactor transformations $t_i(h)$ coincide with the coefficients of the cofactor  characteristic polynomial $t_{n-1} (h-\lambda g)$, precisely we have
\begin{equation}\label{char-tn}
t_{n-1}(h-\lambda g)=\sum_{i=0}^{n-1} (-1)^{n-1-i} t_i(h)\lambda^{n-1-i}.
\end{equation}
\end{proposition}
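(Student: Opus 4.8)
The plan is to apply the defining formula \eqref{tkformula} for the top cofactor $t_{n-1}$ directly to the bilinear form $h-\lambda g$ and then expand by the binomial theorem, exactly paralleling the computation of $\chi_h(\lambda)$ carried out above for the $s_k$ invariants. Treating $\lambda$ as a scalar parameter and $h-\lambda g$ as a $(1,1)$-double form, the top cofactor formula gives
\[
t_{n-1}(h-\lambda g)=\frac{1}{(n-1)!}*\bigl((h-\lambda g)^{n-1}\bigr),
\]
where the power is taken in the exterior algebra of double forms.

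Next I would expand $(h-\lambda g)^{n-1}$ using the binomial theorem. This is legitimate because $g$ and $h$ are both $(1,1)$-double forms, so by the (anti)commutativity rule the relevant sign is $(-1)^{1\cdot 1+1\cdot 1}=1$ and the two factors commute under the exterior product; hence
\[
(h-\lambda g)^{n-1}=\sum_{i=0}^{n-1}\binom{n-1}{i}(-1)^{n-1-i}\lambda^{n-1-i}\,g^{n-1-i}h^{i}.
\]
Applying the linear operator $\frac{1}{(n-1)!}*$ term by term and using the identity $\frac{1}{(n-1)!}\binom{n-1}{i}=\frac{1}{i!(n-1-i)!}$ rewrites the $i$-th summand as $(-1)^{n-1-i}\lambda^{n-1-i}$ times $\frac{1}{i!(n-1-i)!}*\bigl(g^{n-1-i}h^{i}\bigr)$, which is precisely $t_i(h)$ by the definition \eqref{tkformula}.

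Collecting the terms then yields the asserted identity \eqref{char-tn}. The computation is essentially routine; the only points requiring care are the justification that the binomial theorem applies (namely the commutativity of $g$ and $h$ under the exterior product of double forms, which follows from the even total degree) and the bookkeeping identifying the binomial coefficients with the factorial normalizations appearing in the definition of $t_i(h)$. I do not expect a serious obstacle here, since the whole argument is the cofactor analogue of the already-established expansion of the characteristic polynomial of $h$, with $\det=*\frac{(\cdot)^n}{n!}$ replaced by $t_{n-1}=\frac{1}{(n-1)!}*\bigl((\cdot)^{n-1}\bigr)$.
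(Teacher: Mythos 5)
Your proposal is correct and follows essentially the same route as the paper: apply the formula $t_{n-1}(\cdot)=\frac{1}{(n-1)!}*\bigl((\cdot)^{n-1}\bigr)$ to $h-\lambda g$, expand by the binomial theorem (valid since $(1,1)$-double forms commute under the exterior product), and match the binomial coefficients with the factorial normalization in the definition of $t_i(h)$. No gaps; this is exactly the paper's argument.
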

\begin{proof}
 The binomial formula shows that
\begin{equation*}
\begin{split}
t_{n-1}(h-\lambda g)=& *\frac{(h-\lambda g)^{n-1}}{(n-1)!} \\
=&*\frac{1}{(n-1)!}\sum_{i=0}^{n-1}\binom{n-1}{i}h^i(-1)^{n-1-i}\lambda^{n-1-i}g^{n-1-i}\\
=& \frac{1}{(n-1)!}\sum_{i=0}^{n-1} (n-1)!(-1)^{n-1-i} \left(*\frac{g^{n-1-i} h^i}{(n-1-i)!i!}\right)\lambda^{n-1-i}\\
=&\sum_{i=0}^{n-1} (-1)^{n-1-i} t_i(h)\lambda^{n-1-i}.
\end{split}
\end{equation*}
Where $t_0=g$.
\end{proof}
\subsection{Laplace  expansions of the  determinant and generalizations}
 The following proposition provides a Laplace type expansion for all the $s_k$ invariants of an arbitrary  bilinear form.
\begin{proposition}
For each $k$, $0\leq k\leq n-1$, we have
\begin{equation}\label{laplaceexpansion}
(k+1)s_{k+1}(h)= \langle t_k(h), h\rangle . 
\end{equation}
Where $\langle .,.\rangle$ denotes the standard inner product of bilinear forms.
\end{proposition}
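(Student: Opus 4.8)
The plan is to unwind both sides into expressions involving only the double Hodge star operator and the exterior product, and then to reduce the whole identity to the trivial multiplicative relation $(g^{n-1-k}h^{k})\,h=g^{n-1-k}h^{k+1}$ inside the algebra $\mathcal D$. Since the inner product is symmetric, I would first rewrite the right-hand side as $\langle t_k(h),h\rangle=\langle h,t_k(h)\rangle$ and apply formula (\ref{B}) with $p=q=1$, which gives $\langle h,t_k(h)\rangle=*\bigl(h\,(*t_k(h))\bigr)$. Thus the task reduces to computing the single $(n-1,n-1)$-double form $*t_k(h)$ and then multiplying it by $h$.

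For this, recall from the definition (\ref{tkformula}) that $t_k(h)=\frac{1}{k!(n-1-k)!}*(g^{n-1-k}h^k)$, so that $*t_k(h)=\frac{1}{k!(n-1-k)!}**(g^{n-1-k}h^k)$. The key simplification is that the double Hodge star squares to the identity on double forms of equal bidegree; since $g^{n-1-k}h^k$ is an $(n-1,n-1)$-form, this yields $*t_k(h)=\frac{1}{k!(n-1-k)!}g^{n-1-k}h^k$. Feeding this back,
\[
\langle h,t_k(h)\rangle=\frac{1}{k!(n-1-k)!}*\bigl(g^{n-1-k}h^{k}\,h\bigr)=\frac{1}{k!(n-1-k)!}*\bigl(g^{n-1-k}h^{k+1}\bigr).
\]
It then remains only to match this with the left-hand side. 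By Proposition \ref{sk-prop} applied with $k+1$ in place of $k$, we have $*\bigl(g^{n-1-k}h^{k+1}\bigr)=(k+1)!(n-k-1)!\,s_{k+1}(h)$, and substituting gives $\langle h,t_k(h)\rangle=\frac{(k+1)!}{k!}s_{k+1}(h)=(k+1)s_{k+1}(h)$, as claimed.

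The only genuinely delicate point I anticipate is the claim that $**=\mathrm{id}$ on the relevant double forms, i.e. the sign bookkeeping hidden in the two applications of the underlying Hodge star. Writing out $**\omega$ from the defining formula $*\omega(\cdot,\cdot)=(-1)^{(p+q)(n-p-q)}\omega(*\cdot,*\cdot)$ and using $**=(-1)^{p(n-p)}$ on $\Lambda^pV$, one finds that the accumulated sign equals $(-1)^{p(n-p)+q(n-q)}$ up to an even contribution; for a $(p,p)$-form this exponent is $2p(n-p)$, hence even, so the sign is $+1$. This is precisely the case needed here, since every double form appearing above has equal bidegrees, so no residual sign survives and the reduction is legitimate.
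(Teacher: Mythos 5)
Your proof is correct and follows essentially the same route as the paper: both reduce $\langle t_k(h),h\rangle$ via formula (\ref{B}) to $*\bigl(\{*t_k(h)\}h\bigr)$, cancel the two Hodge stars on the $(n-1,n-1)$-form $g^{n-1-k}h^k$, and identify $\frac{1}{k!(n-1-k)!}*(g^{n-1-k}h^{k+1})$ with $(k+1)s_{k+1}(h)$ via Proposition \ref{sk-prop}. The only difference is that you make explicit the sign verification for $**=\mathrm{id}$ on double forms of equal bidegree, which the paper leaves implicit.
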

\begin{proof}
Using basic properties of the exterior product of double forms  and the generalized Hodge star operator, see section \ref{prem.}, it is straightforward that
\begin{equation*}
\langle t_k(h), h\rangle =*(\{*t_k(h)\}h)=*\left\{ \frac{g^{n-k-1}}{(n-k-1)!}
\frac{h^{k+1}}{k!}\right\}=(k+1)s_{k+1}(h).
\end{equation*}
\end{proof}
Let us now clarify its relation to the classical Laplace expansion (called also cofactor expansion) of the determinant. Remark that for $k=n-1$ we have
\begin{equation*}
ns_n=\langle t_{n-1}(h), h\rangle=\sum_{i,j=1}^nt_{n-1}(h)(e_i,e_j)h(e_i,e_j).
\end{equation*}
Where $\left(e_i\right)$ is an arbitrary orthonormal basis of $V$. Recall that by definition the factor $t_{n-1}(h)(e_i,e_j)$ is the usual $(ij)$-cofactor of the matrix $(h(e_i,e_j))$ and $s_n$ is its determinant. We therefore recover the classical Laplace expansion of the determinant.
Actually, Laplace expansion of the determinant is more refined. Precisely, it asserts that for any  $1\leq i\leq n$  we have
\begin{equation*}
s_n(h)=\sum_{j=1}^nt_{n-1}(h)(e_i,e_j)h(e_i,e_j).
\end{equation*}
In view of formula \ref{compo-product},  the later expansion can be written in the following form
\begin{equation}\label{Laplace-exp}
 \bigl(t_{n-1}(h)\bigr)^t\circ   h=s_n(h)g,
\end{equation}
or equivalently,
\[h^t\circ t_{n-1}(h)=s_n(h)g.\]
In other words the inverse of $h$ with respect to the composition  product $\circ$ is $\frac{1}{s_n(h)}  \bigl(t_{n-1}(h)\bigr)^t$. For a different  proof of the  last formula \ref{Laplace-exp} see Corollary I to proposition 7.4.1 in \cite{Greub-book}.\\
Following Greub \cite{Greub-book}, we use formula \ref{Laplace-exp} to get a useful formula for all the higher cofactor transformations $t_k(h)$ in terms of the composition product. Precisely we prove the following
\begin{proposition}\label{lemmatn}
Let  $1\leq i\leq n-1$  then we have the induction formula
\begin{equation}\label{ind-form}
t_i(h)=s_i(h)g-h^t\circ t_{i-1}(h).
\end{equation}
In particular, for $1\leq k\leq n-1$ we have
\begin{equation}\label{explicit-form}
t_k(h)=\sum_{r=0}^k(-1)^rs_{k-r}(h)(h^t)^{ \incircle}=s_k(h)g-s_{k-1}(h)h^t+s_{k-2}(h)h^t\circ h^t - ....
\end{equation}
\end{proposition}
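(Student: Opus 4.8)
The plan is to derive the induction formula (\ref{ind-form}) from the Laplace expansion (\ref{Laplace-exp}) applied to the perturbed bilinear form $h-\lambda g$, and then to obtain the closed expression (\ref{explicit-form}) by iterating (\ref{ind-form}). Before starting, I would record the one fact that makes the expansions clean: when restricted to $(1,1)$-double forms the composition product $\circ$ is nothing but composition of the associated endomorphisms of $V$, so the metric $g$, which corresponds to $\mathrm{id}_V$, is a two-sided unit for $\circ$ on $\mathcal{D}^{1,1}$ (one checks this directly from (\ref{compo-product}) using $g=\sum_k e_k\otimes e_k$). Hence $h^t\circ g=g\circ h^t=h^t$ and $g\circ t_i(h)=t_i(h)$, which I will use freely below.

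First I would replace $h$ by $h-\lambda g$ in the Laplace expansion (\ref{Laplace-exp}); this is legitimate since (\ref{Laplace-exp}) holds for every bilinear form and $\lambda$ is merely a parameter. As $g^t=g$, it becomes
\begin{equation*}
(h^t-\lambda g)\circ t_{n-1}(h-\lambda g)=s_n(h-\lambda g)\,g=\det(h-\lambda g)\,g=\chi_h(\lambda)\,g.
\end{equation*}
Into this I would substitute the cofactor characteristic polynomial expansion (\ref{char-tn}) for $t_{n-1}(h-\lambda g)$ on the left and $\chi_h(\lambda)=\sum_{i=0}^n(-1)^{n-i}s_i(h)\lambda^{n-i}$ on the right. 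Using the unit property of $g$, the left-hand side splits into exactly two families of terms, one coming from $h^t$ (which raises every $t_i(h)$ to $h^t\circ t_i(h)$) and one from $-\lambda g$ (which merely shifts the power of $\lambda$), so both sides become polynomials in $\lambda$ whose coefficients are bilinear forms.

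The key step is then the comparison of these coefficients. Collecting the coefficient of $\lambda^{n-j}$ produces on the left precisely $(-1)^{n-j}\bigl(h^t\circ t_{j-1}(h)+t_j(h)\bigr)$ and on the right $(-1)^{n-j}s_j(h)\,g$; for $1\le j\le n-1$ the common sign cancels and one is left with exactly $t_j(h)=s_j(h)g-h^t\circ t_{j-1}(h)$, which is (\ref{ind-form}). The two extreme powers serve only as consistency checks: the $\lambda^0$ term reproduces the original Laplace expansion $h^t\circ t_{n-1}(h)=s_n(h)g$, while the $\lambda^n$ term reduces to the normalizations $t_0(h)=g$ and $s_0(h)=1$. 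Finally, the explicit formula (\ref{explicit-form}) follows by an immediate induction on $k$: assuming it for $t_{k-1}(h)$, substituting into $t_k(h)=s_k(h)g-h^t\circ t_{k-1}(h)$ and shifting the summation index by one (each application of $h^t\circ$ raising the composition power $(h^t)^{\incircle}$ by one) regroups the terms into $\sum_{r=0}^k(-1)^r s_{k-r}(h)(h^t)^{\incircle}$, with base case $t_0(h)=g$.

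There is no deep difficulty in this argument; I expect the main obstacle to be purely bookkeeping, namely the sign tracking when matching powers of $\lambda$ in the coefficient comparison and the reindexing in the inductive step, both resting on the preliminary verification that $g$ is the unit for $\circ$ on $(1,1)$-forms so that the left-hand side expands into exactly the two advertised families of terms.
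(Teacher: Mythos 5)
Your proof is correct and follows essentially the same route as the paper: substitute $h-\lambda g$ into the Laplace expansion \eqref{Laplace-exp}, expand both sides via \eqref{char-tn} and \eqref{char-sn}, and equate coefficients of $\lambda^{n-j}$, with \eqref{explicit-form} then following by iterating the recursion from $t_0(h)=g$. The only difference is that you make explicit the unit property $g\circ\omega=\omega\circ g=\omega$ on $(1,1)$-forms, which the paper absorbs into its ``straightforward manipulation.''
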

\begin{proof}
Formula \ref{Laplace-exp} asserts that
\[(h-\lambda g)^t\circ t_{n-1}(h-\lambda g)=s_n(h-\lambda g)g.\]
Using the expansions \ref{char-tn}, \ref{char-sn} we get
\[(h^t-\lambda g)\circ \sum_{i=0}^{n-1}(-1)^{n-1-i}t_i(h)\lambda^{n-1-i}=\sum_{i=0}^n(-1)^{n-i}s_i(h)\lambda^{n-i}g.\]
A straightforward manipulation shows that
\[\sum_{i=1}^{n-1}\bigl(h^t\circ t_{i-1}(h)+t_i(h)-s_i(h)g\bigr)(-1)^{n-i}\lambda^{n-i}=0.\]
This completes the proof.
\end{proof}
\subsubsection{Further Laplace expansions}
Recall that the determinant of $h$ is determined by $h^n$. As the later expression can be written in several ways as a product $h^{n-r}h^r$ for each $r$, we therefore get different expansions for the determinant by blocks as follows:
\begin{equation*}
\begin{split}
&\det h =*\frac{h^n}{n!}=*\frac{h^{n-r}h^r}{n!}=\frac{(n-r)!r!}{n!}\langle *\frac{h^{n-r}}{(n-r)!},\frac{h^r}{r!}\rangle\\
&=\frac{1}{\binom{n}{r}} \sum_{{\scriptstyle i_1<i_2<...<i_r} \atop{\scriptstyle  j_1<j_2<...<j_r}}\epsilon(\rho)\epsilon(\sigma)\frac{h^r}{r!}\left(e_{i_1}, ...,e_{i_r},e_{j_1}, ..., e_{j_r}\right)\frac{h^{n-r}}{(n-r)!}\left(e_{i_{p+1}}, ..., e_{i_n}, e_{j_{p+1}}, ..., e_{j_n}\right).
\end{split}
\end{equation*}
Where $\{e_1,e_2,...,e_n\}$ is an orthonormal basis of $V$,  $\epsilon(\rho)$ and  $\epsilon(\sigma)$ are the signs of the permutations  $\rho=(i_1,...,i_n)$ and $\sigma=(j_1,...,j_n)$ of $(1,2,...,n)$.
Recall that $\frac{h^r}{r!}\left(e_{i_1}, ..., e_{i_r},e_{j_1},..., e_{j_r}\right)$  (resp. $\frac{h^{n-r}}{(n-r)!}\left(e_{i_{p+1}}, ..., e_{i_n}, e_{j_{p+1}}, ..., e_{j_n}\right)$ equals the determinant of the $r\times r$ sub-matrix $\left(h(e_{i_k},e_{j_l})\right)$ for $1\leq k,l\leq r$  (resp  the determinant of the $n-r\times n-r$ sub-matrix $\left(h(e_{i_k},e_{j_l})\right)$ for $p+1\leq k,l\leq n$). Note that the second sub-matrix is just the co-matrix of the first sub-matrix, that is the sub-matrix obtained from the ambient matrix $(h(e_i,e_j))$ of size $n$ after removing the rows $i_1,...,i_n$ and the columns $j_1,...,j_n$.  Let us mention here also that the original Laplace expansion is finer than the previous expansion, precisely it says that for any choice of $j_1,...,j_r$ we have
\begin{equation*}
\det h=\sum_{ i_1<i_2<...<i_r} \epsilon(\rho)\epsilon(\sigma) \frac{h^r}{r!}\left(e_{i_1}, ...,e_{i_r};e_{j_1}, ..., e_{j_r}\right)\frac{h^{n-r}}{(n-r)!}\left(e_{i_{p+1}}, ..., e_{i_n}; e_{j_{p+1}}, ..., e_{j_n}\right).
\end{equation*}
In view of formula \ref{compo-product},  the later expansion can be written in the following compact form
\begin{equation}
(\det h )\frac{g^r}{r!}=\bigl(*\frac{h^{n-r}}{(n-r)!}\bigr)^t\circ \frac{h^r}{r!} .
\end{equation}
This was first noticed in  \cite{Greub-book}, see  Corollary to Proposition 7.2.1.
\begin{remark}
One can write easily similar expansions for all the  lower $s_k$ invariants of $h$.  In fact,  the product  $g^{n-k}h^k$ can be written  in different ways as $g^ph^qg^{n-k-p}h^{k-q}$   for $0\leq q\leq k$ and $0\leq p\leq n-k$. Precisely we have, for $1\leq k\leq n$ and  for every $0\leq q\leq k$ and $0\leq p\leq n-k$, the following expansion
\begin{equation*}
\begin{split}
& k!(n-k)!s_k(h) =\ast g^{n-k}h^k =*\left( g^ph^qg^{n-k-p}h^{k-q}\right)
=\langle  g^ph^q, *g^{n-k-p}h^{k-q}\rangle=\\
&\sum_{{\scriptstyle   i_1<i_2<...<i_{p+q}} \atop{\scriptstyle   j_1<j_2<...<j_{p+q}}}\epsilon(\rho)\epsilon(\sigma) g^ph^q\left(e_{i_1}, ..., e_{i_{p+q}},e_{j_1},..., e_{j_{p+q}}\right) g^{n-k-p}h^{k-q} \left(e_{i_{p+q+1}}, ..., e_{i_n}, e_{j_{p+q+1}}, ..., e_{j_{n}}\right).
\end{split}
\end{equation*}
Where  as above $\{e_1,e_2,...,e_n\}$ is an orthonormal basis of $V$,  $\epsilon(\rho)$ and  $\epsilon(\sigma)$ are the signs of the permutations  $\rho=(i_1,...,i_n)$ and $\sigma=(j_1,...,j_n)$ of $(1,2,...,n)$. 
\end{remark}
\subsection{Girard-Newton  identities}
\begin{proposition}[Girard-Newton  identities]
For $0\leq k\leq n-1$, the trace of $t_k$ is given by 
\begin{equation}\label{newtid}
\ccc\, t_k(h)=(n-k)s_k(h). 
\end{equation}
Where $\ccc$ denotes the contraction map.
\end{proposition}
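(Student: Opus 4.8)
The plan is to start from the Hodge--star expression (\ref{tkformula}) for $t_k(h)$ and contract it, then to recognize the outcome as a multiple of the Hodge--star expression (\ref{skformula}) for $s_k(h)$. Writing $\eta=g^{n-1-k}h^k\in\mathcal{D}^{n-1,n-1}$, so that $t_k(h)=\frac{1}{k!(n-1-k)!}*\eta$, and pulling the scalar factor through the contraction, the whole computation reduces to evaluating $\ccc(*\eta)$.

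The key step is to convert $\ccc(*\eta)$ back into a Hodge star of a metric--multiplied form by means of the identity $g\omega=*\ccc*\omega$ from (\ref{A}). Applying it with $\omega=\eta$ gives $g\eta=*\ccc*\eta$; applying the Hodge star $*$ to both sides then yields $*(g\eta)=**(\ccc*\eta)$. Now the point is purely one of bidegree: since $\eta$ has bidegree $(n-1,n-1)$, the form $\ccc*\eta$ has bidegree $(0,0)$, i.e.\ it is a scalar, and on scalars the double Hodge star is the identity. Hence $\ccc(*\eta)=*(g\eta)=*(g^{n-k}h^k)$.

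Assembling the pieces gives $\ccc\, t_k(h)=\frac{1}{k!(n-1-k)!}*(g^{n-k}h^k)$. Comparing with $(n-k)s_k(h)=\frac{n-k}{k!(n-k)!}*(g^{n-k}h^k)$ and using the elementary identity $\frac{n-k}{(n-k)!}=\frac{1}{(n-k-1)!}$ completes the argument. I do not anticipate a genuine obstacle here: the only thing to watch is the bidegree bookkeeping in the middle step, which is precisely what makes $\ccc*\eta$ a scalar and thereby removes any sign ambiguity from the double Hodge star. (If one preferred to avoid even this remark, the same conclusion follows from $\ccc\omega=*g*\omega$ applied to $\omega=*\eta$, together with the fact that $**$ is $+1$ in bidegree $(n-1,n-1)$.)
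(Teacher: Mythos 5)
Your proof is correct and follows essentially the same route as the paper, which computes $\ccc\, t_k(h)=*g*t_k(h)=*\bigl(\tfrac{g^{n-k}h^k}{k!(n-k-1)!}\bigr)=(n-k)s_k(h)$ directly from the identity $\ccc\,\omega=*g*\omega$ of (\ref{A}). Your use of the dual identity $g\omega=*\ccc*\omega$ on $\eta=g^{n-1-k}h^k$, followed by $**=\mathrm{id}$, is just the mirror image of that one-line computation, as your own closing parenthetical already observes.
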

\begin{proof}
Using basic properties of the exterior product of double forms  and the  the generalized  Hodge star operator, see section \ref{prem.}, we immediately get
\begin{equation*}
\ccc\, t_k(h)=*g*t_k(h)=*\left\{ \frac{g^{n-k}}{(n-k-1)!}
\frac{h^k}{k!}\right\}= (n-k)s_k(h).
\end{equation*}
\end{proof}
In order to explain why the previous formula coincides with the classical Girard-Newton  identities,  we shall use proposition \ref{lemmatn}.  Let  for $1\leq i\leq n-1$, $p_i=\ccc\,  h^{ \iincircle}$. Proposition \ref{lemmatn} shows that 
\begin{equation*}
\ccc\,  t_r(h)=\sum_{i=0}^r(-1)^is_{r-i}(h)p_ i.
\end{equation*}
Therefore we can reformulate  the identity (\ref{newtid}) as
\begin{equation*}
\sum_{i=0}^r(-1)^is_{r-i}(h)p_ i=(n-r)s_r(h),
\end{equation*}
or
\begin{equation*}
rs_r(h)=\sum_{i=1}^r(-1)^{i+1}s_{r-i}(h)p_ i.
\end{equation*}
That are the celebrated classical Girard-Newton identities.
\begin{remark}[Terminology]
The transformations $t_r$ are famous in the literature as Newton's transformations. Up to the author's knowledge,  it was Reilly \cite{Reilly}  the first to call them as such (he treated only the case of diagonalizable matrices). He motivated this by the fact that they generate  the classical Newton identities as above. With reference to the above discussion, the terminology cofactor transformation or  characteristic transformation  is in the author's opinion  more appropriate.
\end{remark}
\subsection{Higher cofactor transformations  and Laplace expansions}
Let  $h$ be a bilinear form on the $n$-dimensional Euclidean vector space $(V,g)$. We define , for $0\leq q\leq n$ and $0\leq r\leq n-q$,  the $(r,q)$ cofactor (or the $(r,q)$ Newton transformation) of $h$ denoted $s_{(r,q)}(h)$ by
\begin{equation}
s_{(r,q)}(h)=\frac{1}{q!\bigl(n-q-r\bigr)!}*\bigl(g^{n-q-r}h^q\bigr).
\end{equation}
Note that $s_{(1,q)}(h)=t_q(h)$ is the cofactor  of order $q$ of $h$ as defined in subsection \ref{adjugate} and $s_{(0,q)}(h)=s_q(h)$ is the $s_q$ invariant of $h$.\\
The higher cofactors  $s_{(r,q)}(h)$ of $h$ satisfy similar properties like the usual  cofactor transformation of $h$ which was discussed above. We list some of them in the following theorem
\begin{theorem} For any  integers $r$ and $q$  such that $0\leq q\leq n$ and  $1\leq r\leq n-q$ we have
\begin{itemize}
\item  The  $(r,q)$ cofactors   $s_{(r,q)}(h)$ coincide with the coefficients of the characteristic polynomial $*\frac{(h-\lambda g)^{n-r}}{(n-r)!}$, precisely we have
\[*\frac{(h-\lambda g)^{n-r}}{(n-r)!}=\sum_{q=0}^{n-r}(-1)^{n-q-r}s_{(r,q)}(h)\lambda^{n-q-r}.\]
\item General Laplace's expansion:  
\[\frac{(q+r)!}{q!}s_{q+r}(h)=\langle s_{(r,q)}(h),h^r\rangle.\]
\item  General Newton's identity: 
\[ c \left( s_{(r,q)}(h)\right)=(n-q-r+1)s_{(r-1,q)},\]
\end{itemize}
\end{theorem}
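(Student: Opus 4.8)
The three assertions are respectively the $(r,q)$-generalizations of Proposition \ref{sk-prop}, of the Laplace expansion \eqref{laplaceexpansion}, and of the Girard--Newton identity \eqref{newtid}, so the plan is to run each of those three arguments again while carrying the extra parameter $r$ along. The one computational fact that drives all three parts is the identity
\[
*\,s_{(r,q)}(h)=\frac{1}{q!(n-q-r)!}\,g^{n-q-r}h^q,
\]
which follows at once from the definition of $s_{(r,q)}(h)$ together with the observation that the double Hodge star squares to the identity on double forms of equal bidegree $(p,p)$. First I would record this as a short lemma: for a general $(p,q)$-double form a direct computation from the definition of $*$ shows that $*{*}$ acts by the sign $(-1)^{p(n-p)+q(n-q)}$, which equals $1$ precisely when $p=q$; since $g^{n-q-r}h^q$ has bidegree $(n-r,n-r)$, this gives the displayed formula. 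This lemma is the hinge of the whole proof.

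For the first bullet I would expand $(h-\lambda g)^{n-r}$ by the binomial theorem, which is legitimate because $g$ and $h$ commute as $(1,1)$-double forms, apply $*$ term by term, and rewrite $\binom{n-r}{q}/(n-r)!=1/\bigl(q!(n-r-q)!\bigr)$. The coefficient of $\lambda^{n-q-r}$ then becomes exactly $(-1)^{n-q-r}s_{(r,q)}(h)$, reproducing verbatim the computation behind \eqref{char-tn}.

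For the second bullet I would use the inner-product formula \eqref{B} in the form $\langle\omega,\theta\rangle=*\bigl((*\omega)\theta\bigr)$, where the sign $(-1)^{(r+r)(n-r-r)}$ is trivial because both $s_{(r,q)}(h)$ and $h^r$ lie in $\mathcal D^{r,r}$. Substituting the lemma gives
\[
\langle s_{(r,q)}(h),h^r\rangle=\frac{1}{q!(n-q-r)!}\,*\bigl(g^{n-q-r}h^{q+r}\bigr),
\]
and comparing the right-hand side with the defining formula \eqref{skformula} for $s_{q+r}(h)$ collapses it to $\tfrac{(q+r)!}{q!}s_{q+r}(h)$. For the third bullet I would instead invoke \eqref{A} in the form $\ccc=*g*$; the lemma turns $\ccc\, s_{(r,q)}(h)$ into $\tfrac{1}{q!(n-q-r)!}*\bigl(g^{n-q-r+1}h^q\bigr)$, and recognizing this last expression as a multiple of $s_{(r-1,q)}(h)$ through its definition produces the factor $(n-q-r+1)$ once the factorials cancel.

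None of the three steps is a genuine obstacle after the lemma is in hand; what requires care is the factorial bookkeeping and the verification that the signs appearing in \eqref{B}, in \eqref{A}, and in $*{*}=\mathrm{Id}$ are all trivial here. The mildly delicate point is precisely the sign $(-1)^{p(n-p)+q(n-q)}$ for $*{*}$: it vanishes only on square-bidegree forms, and this is exactly why the clean passage $s_{(r,q)}\mapsto *s_{(r,q)}$ is available and why the whole scheme goes through for the $(p,p)$-type forms that arise throughout this section.
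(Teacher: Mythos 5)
Your proposal is correct and follows essentially the same route as the paper: the binomial formula for the first bullet, the identity $\langle\omega,\theta\rangle=*\bigl((*\omega)\theta\bigr)$ from (\ref{B}) for the Laplace expansion, and $\ccc\,*=*g$ from (\ref{A}) for the Newton identity, with the observation that $*$ is an involution on square-bidegree double forms doing the work of your lemma. (Only a minor imprecision: the sign $(-1)^{p(n-p)+q(n-q)}$ is $+1$ whenever $p=q$, but not \emph{only} then; this does not affect the argument.)
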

\begin{proof}
The first statement is a direct consequence of the binomial formula. To prove the general Laplace's expansion we again use the properties of the exterior product of double forms to get a one line proof as follows
\[ \langle s_{(r,q)}(h),h^r\rangle=\ast \left(\frac{g^{n-q-r}h^q}{q!(n-q-r)!}h^r\right)=\ast \left(\frac{g^{n-q-r}h^{q+r}}{(q+r)!(n-q-r)!}\right)\frac{(q+r)!}{q!}=\frac{(q+r)!}{q!}s_{q+r}(h).\]
In the same way we prove the general  Newton's identity as follows
\begin{equation*}
\begin{split}
  c \left( s_{(r,q)}(h)\right)&=c\ast \left(\frac{g^{n-q-r}h^q}{q!(n-q-r)!}\right)=\ast g \left(\frac{g^{n-q-r}h^q}{q!(n-q-r)!}\right)\\
&=\ast \left(\frac{g^{n-q-r+1}h^q}{q!(n-q-r+1)}!\right)(n-q-r+1)=(n-q-r+1)s_{(r-1,q)}.
\end{split}
\end{equation*}
\end{proof}
\begin{remark}\begin{enumerate}
\item
If the bilinear form $h$ is diagonalizable, that is if there exists an orthonormal basis $(e_i)$ of $V$ such that $h(e_i,e_j)=\lambda_ig(e_i,e_j)$ for all $i,j$. We call the real numbers $\lambda_i$ the eigenvalues of $h$. Without loss of generality we assume that $\lambda_1\leq \lambda_2\leq ...\leq \lambda_n $.\\
It is not difficult to show that all the double forms $ s_{(r,q)}(h)$ with $r\geq 1$  are then also diagonalizable in the sense that
\[  s_{(r,q)}(h)\left(e_{i_1}, ..., e_{i_r},e_{j_1}, ...,e_{j_r}\right)=\lambda_{i_1i_2...i_r}\frac{g^r}{r!}\left(e_{i_1}, ..., e_{i_r},e_{j_1}, ...,e_{j_r}\right).\]
Where $i_1<i_2<...<i_r$, $j_1<j_2<...<j_r$  and the eigenvalues of $ s_{(r,q)}(h)$ are given by
\[ \lambda_{i_1i_2...i_r}=\sum_{{\scriptstyle j_1<j_2<...<j_q} \atop{\scriptstyle \{j_1,j_2,...,j_q\}\cap \{i_1,i_2,...,i_r\}=\phi }}\lambda_{j_1}\lambda_{j_2}...\lambda_{j_q}.\]
\item
 In some applications it is useful to find the determinant of the sum of two matrices or more generally the $s_k$ invariant of the sum. Using double forms formalism as above one can prove easily in one line the following identity for arbitrary bilinear forms $A$ and $B$ once seen as $(1,1)$ double forms and for $1\leq k\leq n$
\[ s_k(A+B)=\sum_{i=0}^k\frac{1}{(k-i)!}\langle s_{(k-i,i)}(A),B^{k-i}\rangle.\]
\item For a bilinear form $h$, denote by $\bar{h}$ the corresponding  linear operator $V\rightarrow V$. It turns out that  the linear operator corresponding to the double form $g^rh^k$ coincides with the $k$-linear extension of the operator $\bar{h}$ to the space  $\Lambda^{r+k}V$ in the sense of \cite{Sergei}.
\end{enumerate}
\end{remark}
\subsection{Jacobi's formula}
\begin{proposition}[A Jacobi's formula for the $s_k$ invariants]
Let $h=h(t)$ be a one parameter family of  bilinear forms on $V$ then 
\begin{equation}
\frac{d}{dt} s_k(h)=\langle t_{k-1}(h),\frac{dh}{dt}\rangle.
\end{equation}
In particular, for $k=n$ we recover the classical Jacobi's formula:
\begin{equation}
\frac{d}{dt} \det (h)=\langle t_{n-1}(h),\frac{dh}{dt}\rangle.
\end{equation}
\end{proposition}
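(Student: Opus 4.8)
The plan is to differentiate the closed formula for $s_k(h)$ supplied by Proposition \ref{sk-prop} and then to recognize the result as an inner product, using exactly the one-line manipulation already exploited in the proof of the Laplace expansion \eqref{laplaceexpansion}. First I would start from
\[
s_k(h)=\frac{1}{k!(n-k)!}\,*\bigl(g^{n-k}h^k\bigr).
\]
Since the double Hodge star operator and the exterior product of double forms are linear, respectively bilinear, operations that do not involve the parameter $t$, and since the metric $g$ is fixed, differentiation in $t$ commutes with $*$ and with multiplication by $g^{n-k}$, so that only the factor $h^k$ is affected.

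Next I would compute $\frac{d}{dt}\bigl(h^k\bigr)$. Because $h$ and $\frac{dh}{dt}$ are both $(1,1)$-double forms, the anticommutativity rule carries the sign $(-1)^{1\cdot 1+1\cdot 1}=+1$, so these factors commute in the exterior product; consequently the Leibniz rule collapses to $\frac{d}{dt}\bigl(h^k\bigr)=k\,h^{k-1}\frac{dh}{dt}$. Substituting and simplifying $\frac{k}{k!}=\frac{1}{(k-1)!}$ yields
\[
\frac{d}{dt}s_k(h)=\frac{1}{(k-1)!(n-k)!}\,*\Bigl(g^{n-k}h^{k-1}\frac{dh}{dt}\Bigr).
\]

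Finally I would identify this right-hand side with $\langle t_{k-1}(h),\frac{dh}{dt}\rangle$. Using the definition $t_{k-1}(h)=\frac{1}{(k-1)!(n-k)!}*\bigl(g^{n-k}h^{k-1}\bigr)$ together with formula \eqref{B} for the inner product (whose sign is $+1$ here, as both arguments are $(1,1)$-forms), I get
\[
\Bigl\langle t_{k-1}(h),\frac{dh}{dt}\Bigr\rangle
=*\Bigl(\{*t_{k-1}(h)\}\frac{dh}{dt}\Bigr)
=*\Bigl\{\frac{g^{n-k}h^{k-1}}{(k-1)!(n-k)!}\frac{dh}{dt}\Bigr\},
\]
which coincides with the previous display. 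The case $k=n$ then returns the classical Jacobi formula, since $s_n=\det$ and $t_{n-1}(h)$ is the cofactor transformation.

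I expect the only genuinely delicate points to be the justification of the factor $k$ in the Leibniz step, which rests on the commutativity of $(1,1)$-forms, and the passage $*t_{k-1}(h)=\frac{g^{n-k}h^{k-1}}{(k-1)!(n-k)!}$, that is, the fact that the double star applied twice is the identity on the $(n-1,n-1)$-double form $g^{n-k}h^{k-1}$. Both facts are already implicit in the earlier Hodge-star computations of Section \ref{prem.} and in the proof of \eqref{laplaceexpansion}, so no new estimate is required; the whole argument is a differentiation followed by the same star-to-inner-product rewriting used before.
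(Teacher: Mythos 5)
Your proposal is correct and follows essentially the same route as the paper: differentiate the closed formula $s_k(h)=\frac{1}{k!(n-k)!}*(g^{n-k}h^k)$ via the Leibniz rule (the factor $k$ coming from the commutativity of $(1,1)$-forms), then recognize $\frac{1}{(k-1)!(n-k)!}*\bigl(g^{n-k}h^{k-1}\frac{dh}{dt}\bigr)$ as $\langle t_{k-1}(h),\frac{dh}{dt}\rangle$ by inserting $**=\mathrm{id}$ and applying the star-to-inner-product identity. The two delicate points you flag are exactly the ones the paper's one-line computation relies on, so nothing is missing.
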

\begin{proof}
Using basic properties of the exterior product of double forms and the generalized Hodge star operator, see \cite{Labbidoubleforms,Labbiminimal}, we  get
\begin{equation}
\begin{split}
\frac{d}{dt} s_k(h(t))=&\frac{d}{dt} \left(*\frac{g^{n-k}h^k(t)}{(n-k)!k!}\right)
=*\left( \frac{g^{n-k}kh^{k-1}}{(n-k)!k!}\frac{dh}{dt} \right)\\
=& *\left( *\left( * \frac{g^{n-k}h^{k-1}}{(n-k)!(k-1)!}\right)\frac{dh}{dt} \right)
= \langle t_{k-1}(h),\frac{dh}{dt}\rangle.
\end{split}
\end{equation}
\end{proof}
\begin{remark}
If one allows the inner product $g$ on $V$ to vary as well, say $g=g(t)$ ,  then at $t=0$ we have the following generalization of the  previous formula:
\begin{equation}
  \frac{d}{dt} s_k(h)=\langle t_{k-1}(h),\frac{dh}{dt}\rangle+\langle t_k-s_kg,\frac{dg}{dt}\rangle.
\end{equation}
The proof is  similar to the above one.
\end{remark}

\subsection{Cayley-Hamilton Theorem}\label{CaHa}
It is now time to give a sense to the top $t_k(h)$, that is $t_n(h)$, where $n$ is the dimension of the vector space $V$.  Recall that for $1\leq k\leq n-1$, formula \ref{explicit-form} asserts that
\begin{equation*}
t_k(h)=\sum_{r=0}^k(-1)^rs_{k-r}(h)(h^t)^{ \incircle}.
\end{equation*}

It is then natural to define $t_n(h)$ to be
\begin{equation}
t_n(h)=\sum_{r=0}^n(-1)^rs_{n-r}(h)(h^t)^{ \incircle}.
\end{equation}

\begin{proposition}[Cayley-Hamilton Theorem]  With the above notations we have
\[t_n(h)=0.\]
\end{proposition}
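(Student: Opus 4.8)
The plan is to show that the formally defined $t_n(h)$ obeys the very same recursion \ref{ind-form} as the genuine cofactor transformations $t_i(h)$, $1\le i\le n-1$, and then to feed this into the Laplace expansion \ref{Laplace-exp}, which collapses the expression to zero. The point is that no geometric input beyond \ref{Laplace-exp} is required: everything reduces to an algebraic rearrangement of the composition product, exploiting that $t_n(h)$ was defined precisely so as to extend the explicit formula \ref{explicit-form} one index past the range where the Hodge-star definition makes sense.

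Concretely, I would start from the definition and peel off the $r=0$ term, using that the zeroth composition power of $h^t$ is the metric $g$ (the identity for $\circ$):
\[
t_n(h)=s_n(h)g+\sum_{r=1}^{n}(-1)^r s_{n-r}(h)(h^t)^{\incircle}.
\]
In each of the remaining terms ($r\ge 1$) I would factor one copy of $h^t$ out on the left and, by associativity of the composition product, pull the common factor $h^t\circ$ outside the summation. After reindexing the sum by $r\mapsto r-1$, the bracketed expression is exactly $t_{n-1}(h)$ by formula \ref{explicit-form} with $k=n-1$. This gives the clean recursion at the top index,
\[
t_n(h)=s_n(h)g-h^t\circ t_{n-1}(h),
\]
now holding by construction rather than via the (unavailable) Hodge-star definition.

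To finish, I would invoke the Laplace expansion \ref{Laplace-exp} in the form $h^t\circ t_{n-1}(h)=s_n(h)g$ and substitute it into the displayed recursion, obtaining
\[
t_n(h)=s_n(h)g-s_n(h)g=0,
\]
as claimed.

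The step I would watch most carefully is the boundary bookkeeping. Both the recursion \ref{ind-form} and the explicit formula \ref{explicit-form} were established only for indices up to $n-1$, while $t_n(h)$ enters as a purely formal extension of \ref{explicit-form}; the entire content of the theorem is that this formal definition is consistent with the recursion one index higher, and it is \ref{Laplace-exp} (the statement that $\tfrac{1}{s_n(h)}(t_{n-1}(h))^t$ inverts $h$ under $\circ$) that supplies the relation closing the recursion. Accordingly I would verify with care that the convention $(h^t)^{\incircle}=g$ at $r=0$, the left-factoring, the sign $(-1)^r$, and the shift $s_{n-r}\mapsto s_{n-1-r}$ all match up, since a single index slip is the one place where this short argument could silently fail.
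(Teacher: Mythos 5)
Your argument is correct and is essentially identical to the paper's own proof: both peel off the $r=0$ term, factor $h^t\circ$ out of the remaining sum to recognize $t_n(h)=s_n(h)g-h^t\circ t_{n-1}(h)$, and then apply the Laplace expansion \ref{Laplace-exp} to conclude. The index and sign bookkeeping you flag all checks out.
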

\begin{proof}
\begin{equation*}
\begin{split}
t_n(h)=& \sum_{r=0}^n(-1)^rs_{n-r}(h)(h^t)^{ \incircle}\\
=& s_n(h)g+ \sum_{r=1}^n(-1)^rs_{n-r}(h)(h^t)^{ \incircle}\\
=&  s_n(h)g- \sum_{r=1}^n(-1)^{r-1}s_{n-r}(h)(h^t)^{ \incircle}\\
=& s_n(h)g-h^t\circ t_{n-1}(h)=0.
\end{split}
\end{equation*}
\end{proof}

\subsubsection{Cayley-Hamilton theorem vs. Infinitisimal Gauss-Bonnet theorem}\label{infinitisimalGB}
Let $M$ be a compact smooth hypersurface of the Euclidean space of dimension $2n+1$. Denote by $B$ the second fundamental form of $M$ and by $s_k(B)$  its $s_k$ invariant.  For each $k$, $0\leq k\leq n$, the first variation of the integral $\int_Ms_{2k}(B){\rm dvol}$ is up to a multiplicative constant,  the integral scalar product $\langle t_{2k}(B), B\rangle$, where $t_{2k}(B)$ is the cofactor transformation of $B$ as above, see for instance  \cite{Labbiminimal}. The later result can be seen as an integral Jacobi's formula.  By Cayley-Hamilton theorem $t_{2n}(B)=0$ and therefore the integral $\int_Ms_{2n}(B){\rm dvol}$ does not depend on the geometry of the hypersurface. In fact, the previous integral is up to a multiplicative constant the Euler-Poincar\'e characteristic  by the Gauss-Bonnet theorem. \\
In this sense, Cayley-Hamilton theorem is  indeed an infinitisimal Gauss-Bonnet theorem.

\subsubsection{Further algebraic identities of Cayley-Hamilton type}
In order to simplify the exposition, we  assume in this subsection   that $h$ is a symmetric bilinear form. We are going first to give an alternative proof for the Cayley-Hamilton theorem.\\
Recall that for $1\leq k\leq n-1$ we have, see \cite{Labbidoubleforms,Labbiminimal}
\begin{equation*}
t_k(h)=\frac{1}{k!\bigl(n-1-k\bigr)!}*\bigl(g^{n-1-k}h^k\bigr)=s_k(h)g-\frac{1}{(k-1)!}c^{k-1}h^k.
\end{equation*}
It is then natural to define $t_n(h)$ to be
\begin{equation*}
t_n(h)=s_n(h)g-\frac{1}{(n-1)!}c^{n-1}h^n.
\end{equation*}
Recall that $*h^n=n!s_n(h)$, and therefore $h^n=n!s_n(h)*1=s_n(h)g^n$.  Consequently we have
\begin{equation*}
t_n(h)=s_n(h)g-s_n(h)\frac{1}{(n-1)!}c^{n-1}g^n=s_n(h)g-s_n(h)g=0.
\end{equation*}
Next we are going to prove similar results for the higher cofactors $s_{(r,q)}(h)$. Formula (15) of \cite{Labbidoubleforms} provides the following expansion for $0\leq r\leq n$ and $1\leq q\leq n-r$:
\begin{equation}\label{higherq}
s_{(r,q)}(h)=\sum_{i=\max \{0,q-r\}}^q\frac{(-1)^{i+q}}{i!q!(i+r-q)!}g^{i+r-q}c^ih^q.\end{equation}
This new form of $ s_{(r,q)}(h)$ allows to extend its definition to the higher values of $q$, namely for $q$ equal to $n-r+1, ...,n$.  For instance in the top case $q=n\geq 2$ we define
\begin{equation}
s_{(r,n)}(h)=\sum_{i= n-r}^n\frac{(-1)^{i+n}}{i!n!(i+r-n)!}g^{i+r-n}c^ih^n.
\end{equation}
Recall that $h^n=s_n(h)g^n$ and therefore $c^ih^n=s_n(h)\frac{i!n!}{(n-i)!}g^{n-i}$, consequently we have for $r\geq 1$
\[s_{(r,n)}(h)=\sum_{i= n-r}^n \frac{(-1)^{i+n}} {(n-i)!(i+r-n)!} g^r=\frac{(-1)^r }{r!} \left( \sum_{j=0}^r (-1)^j\binom{r}{j}\right) g^r=0.\]
We have therefore proved that
\begin{equation}
s_{(r,n)}(h)=0\,\,\, {\rm for \,\, all}\,\, 1\leq r\leq n.
\end{equation}
For $r=1$ we recover $t_n(h)=0$ that is the usual Cayley-Hamilton theorem for $h$. 
The next case is when $q=n-1\geq 2$ and $1\leq r\leq n-1$, here we set
\begin{equation}
s_{(r,n-1)}(h)=\sum_{i= n-r-1}^{n-1}\frac{(-1)^{i+n-1}}{i!(n-1)!(i+r-n+1)!}g^{i+r-n+1}c^ih^{n-1}.
\end{equation}
 Next we are going to show that  
\begin{equation}
s_{(r,n-1)}(h)=0\,\,\, {\rm for \,\, all}\,\, 2\leq r\leq n-1.
\end{equation}
In order to prove the above identities, first remark that $h^{n-1}$ is a $(n-1,n-1)$ double form on an $n$ dimensionnal vector space, then using proposition 2.1 of \cite{LabbipqEinstein} we can write $h^{n-1}=g^{n-2}k$ for some $(1,1)$ double form $k$ on $V$. Consequently, using some identities from \cite{Labbidoubleforms}  we get for $i\leq n-2$ the following
\begin{equation*}
\begin{split}
c^i(h^{n-1})=& c^i(g^{n-2}k)=\ast g^i\ast \frac{g^{n-2}k}{(n-2)!}(n-2)!\\
=& \ast g^i\left(-k+gck\right)(n-2)!=\left(-\ast g^ik+\ast g^{i+1}ck\right)(n-2)!\\
=& -i!(n-2)!\left( -\frac{g^{n-i-2}k}{(n-i-2)!}+\frac{g^{n-i-1}ck}{(n-i-1)!}\right)+\frac{(i+1)!)n-2)!}{(n-i-1)!}g^{n-i-1}ck\\
=& \frac{(n-2)!i!}{(n-i-2)!}g^{n-i-2}\left( k+\frac{i(ck)}{n-i-1}g\right).\\
\end{split}
\end{equation*}
For $i=n-1$, we get
\[ c^{n-1}h^n=\left( (n-1)!\right)^2 ck.\]
Consequently the formula above defining $s_{(r,n-1)}(h)$ takes the form
\begin{equation*} 
\begin{split}
& s_{(r,n-1)}(h)= \frac{ck}{r!}g^r+\sum_{i=n-1-r}^{n-2}\frac{(-1)^{i+n-1}}{(n-1)(i+r-n+1)!(n-i-2)!}g^{r-1}\left(k+\frac{i(ck)}{n-i-1}g\right)\\
=&\left(\sum_{i=n-1-r}^{n-2}\frac{(-1)^{i+n-1}}{(i+r-n+1)!(n-i-2)!}\right)\frac{g^{r-1}k}{n-1}+\left(\sum_{i=n-1-r}^{n-1}\frac{(-1)^{i+n-1}i}{(i+r-n+1)!(n-i-1)!}\right)\frac{g^rck}{n-1}.
\end{split}
\end{equation*}
Changing the index of both sums to $j=i-n+1+r$ we immediately obtain
\[s_{(r,n-1)}(h)=\left(\sum_{j=0}^{r-1}\frac{(-1)^j}{j!(r-j-1)!} \right)\frac{(-1)^r g^{r-1}k}{n-1}+\left(\sum_{j=0}^r\frac{(-1)^j(j+n-1-r)}{j!(r-j)!}\right)\frac{ (-1)^rg^rck}{n-1}.\]
It is then easy to check that the previous two sums are both zero for $r\geq 2$.\\
In the same way we define $s_{(r,n-i)}(h)$ using formula (\ref{higherq}), one can prove similarly, as in the  cases where $i=0$ and $i=1$ above, the following general result
\begin{theorem}[A general Cayley-Hamilton theorem]\label{CHGtheorem}
For  $1\leq i+1\leq r\leq n-i$ we have
\[ s_{(r,n-i)}(h)=0.\]
\end{theorem}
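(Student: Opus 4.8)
The plan is to follow verbatim the template already used for $i=0$ and $i=1$, pushing the same three ingredients through for general $i$. The starting point is a structural reduction. Since the index range is assumed nonempty we have $i+1\le n-i$, hence $n-2i\ge 1$, and the double form $h^{n-i}$ has bidegree $(n-i,n-i)$ with $2(n-i)>n$. By Proposition 2.1 of \cite{LabbipqEinstein} such a form factors uniquely as $h^{n-i}=g^{n-2i}k$ for a $(i,i)$ double form $k$, and $k$ inherits the first Bianchi identity from $h^{n-i}$ (a product of $(1,1)$ forms satisfies it, and multiplication by $g$ is injective on the relevant weight space). This last point is what licenses the star–contraction formulas below.

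First I would compute the contractions $c^jh^{n-i}=c^j(g^{n-2i}k)$ that enter the defining expansion \ref{higherq} with $q=n-i$. The cleanest route is the one used for $i=1$: combine $c=*g*$ from \ref{A} (so that $c^j=*g^j*$) with formula \ref{star:form}, applied twice — once to evaluate $*(g^{n-2i}k)$ as $(n-2i)!\sum_{s}\frac{(-1)^{s+i}}{(s!)^2}g^sc^sk$, and once more after multiplying by $g^j$. Equivalently one may iterate the standard commutation relation between $c$ and multiplication by $g$. Either way the result writes each $c^jh^{n-i}$ as an explicit combination $\sum_v \beta^{(i)}_{j,v}\,g^{n-2i-j+v}c^vk$ with coefficients $\beta^{(i)}_{j,v}$ that are ratios of factorials. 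As for $i=1$, the boundary values of $j$ (where the generic contraction formula degenerates and a factorial of a negative integer would otherwise appear, compare the separate treatment of $c^{n-1}h^{n-1}$) must be recorded by hand.

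Substituting these expansions into \ref{higherq} and interchanging the order of summation, I would collect the coefficient of each building block $g^{r-i+v}c^vk$ for $v=0,1,\dots,i$; each is a genuine $(r,r)$ double form, consistent with $s_{(r,n-i)}(h)$ having bidegree $(r,r)$, and the top one $g^rc^ik$ carries the scalar $c^ik$. The vanishing $s_{(r,n-i)}(h)=0$ is then equivalent to the vanishing of these $i+1$ scalar coefficients. After a shift of the summation index analogous to the one performed in the case $q=n-1$, I expect each coefficient to take the form $\frac{\mathrm{const}}{M_v!}\sum_{j=0}^{M_v}(-1)^j\binom{M_v}{j}Q_v(j)$ with $M_v=r-i+v$ and $Q_v$ a polynomial in $j$ of degree at most $v$. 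Such a sum is, up to sign, the $M_v$-th finite difference of $Q_v$ at the origin, so it vanishes as soon as $\deg Q_v<M_v$, i.e. whenever $r>i$.

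The main obstacle is precisely this last step carried out uniformly in $v$ and $i$. The binding case is $v=i$, where $M_i=r$ and $Q_i$ has degree $i$: the coefficient of $g^rc^ik$ is proportional to $\sum_{j=0}^r(-1)^j\binom{r}{j}Q_i(j)$, which vanishes exactly when $r\ge i+1$ by the elementary identity $\sum_{j=0}^{M}(-1)^j\binom{M}{j}j^{t}=0$ for $t<M$; this reproduces the stated threshold. For $i=1$ this is nothing but the two identities $\sum_j(-1)^j\binom{r-1}{j}=0$ and $\sum_j(-1)^j\binom{r}{j}(j+n-1-r)=0$ that settle the case $q=n-1$. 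The real labor, and where an error is most likely to hide, is in producing the coefficients $\beta^{(i)}_{j,v}$ explicitly, confirming that $\deg Q_v\le v$ with $M_v=r-i+v$ for every $v$, and dispatching the degenerate boundary terms in a way that does not depend on the size of $i$.
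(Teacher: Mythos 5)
Your proposal follows exactly the route the paper intends: the paper proves the cases $i=0$ and $i=1$ in full (via the decomposition $h^{n-1}=g^{n-2}k$ from Proposition 2.1 of \cite{LabbipqEinstein}, explicit contraction formulas for $c^j(g^{n-2}k)$, and the vanishing of alternating binomial sums after an index shift) and then simply asserts that the general case ``can be proved similarly.'' Your outline is the correct uniform-in-$i$ version of that argument --- indeed the finite-difference criterion $\deg Q_v<M_v$ with $M_v=r-i+v$, whose binding case $v=i$ reproduces the threshold $r\ge i+1$, supplies more of the general mechanism than the paper itself writes down.
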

Finally, let us mention that it would be interesting to reformulate in a nice way the previous theorem in terms of the composition product.

\section{Euclidean invariants for Symmetric $(2,2)$ Double Forms}\label{second-section}
In this section we are  going to generalize the previous results to symmetric $(2,2)$ double forms that satisfy the first Bianchi identity. Recall that  a $(2,2)$ symmetric double form is a multilinear form with four arguments that is skew symmetric with respect to the interchange of the first two arguments or the last two, and it is symmetric if we interchage the first two arguments with the last two.  The Riemann curvature tensor is a typical example.\\
During this section  $R$  denotes a  symmetric  $(2,2)$ double form on the $n$-dimensional Euclidean vector space  $(V,g)$ that  satisfies the first Bianchi identity. 
\subsection{The $h_{2k}$ invariants vs $s_k$ invariants}
 For each $k$, $0\leq 2k\leq \dim V=n$, we define (by analogy to the $s_k$ invariants of the previous section)   the $h_{2k}$ invariant of  $R$ to be 
\begin{equation}
h_{2k}(R)=\frac{1}{(n-2k)!}*\bigl( g^{n-2k}R^k\bigr).
\end{equation}
In particular $h_0=1$ and $h_n=*R^k$ in case $n=2k$. \\
In the case where $R$ is the Riemann curvature tensor of a Riemannian manifold, $h_{2k}(R)$ is know as the $2k$-th Gauss-Bonnet curvature.
\begin{remark}
Suppose $n=2k$ is even and define the $h_n$-characteristic polynomial of $R$ to be 
$h_n\left(R-\lambda \frac{g^2}{2}\right)$. A direct computation shows that
\begin{equation*}
\begin{split}
h_n\left( R-\lambda \frac{g^2}{2}\right)=& \ast \left( R-\lambda \frac{g^2}{2}\right)^k\\
=& \ast\sum_{i=0}^k\binom{k}{i}R^i\frac{(-1)^{k-i}}{2^{k-i}}\lambda^{k-i}g^{2k-2i}=\sum_{i=0}^k\binom{k}{i}\frac{(-1)^{k-i}}{2^{k-i}}\lambda^{k-i}\ast \left(g^{2k-2i}R^i\right)\\
=& \sum_{i=0}^k\binom{k}{i}\frac{(-1)^{k-i}}{2^{k-i}}(2k-2i)!h_{2i}(R)\lambda^{k-i}.
\end{split}
\end{equation*}
The $h_n$-characteristic polynomilal of $R$ is therefore a polynomial of degree $k$ in $\lambda$ and its coefficients are all the $h_{2i}(R)$ invariants of $R$. It would be interesting to see whether for each $k$, the  polynomial $h_{2k}(R)$, which is homogeneous of degree $2k$  and defined on the space of  symmetric $(2,2)$ double forms,  is a  hyperbolic  polynomial with respect to $\frac{g^2}{2}$ in the sens of G\r{a}rding \cite{Garding}.
\end{remark}
\subsection{Cofactor transformations  of $(2,2)$ double forms}
 Using the same procedure of cofactors as in the previous section we obtain several  Newton  transformations  of $R$.  Let us here examine the following
\begin{equation}
N_{2k}(R)=*\frac{g^{n-2k-2}R^k}{(n-2k-2)!}\,\, {\rm and}\,\, T_{2k}(R)=*\frac{g^{n-2k-1}R^k}{(n-2k-1)!}.
\end{equation}
Note that $N_{2k}(R)$ is defined for $2\leq 2k\leq n-2$ and it  is a  $(2,2)$ symmetric double form on $V$  like $R$ that satisfies the first Bianchi identity. 
On the other hand $T_{2k}(R)$ is defined for $2\leq 2k\leq n-1$ and it  is a  symmetric bilinear form on $V$.\\
Theorem 4.1  of \cite{Labbidoubleforms} provides  explicit useful formulas for all the $N_{2k}(R)$ and $T_{2k}(R)$ as follows:
\begin{equation}\label{explicitNk}
\begin{split}
N_{2k}(R)&=\frac{c^{2k-2}R^k}{(2k-2)!}-\frac{c^{2k-1}R^k}{(2k-1)!}g+\frac{c^{2k}R^k}{2(2k)!}g^2,\\
T_{2k}(R)&=\frac{c^{2k}R^k}{(2k)!}g -\frac{c^{2k-1}R^k}{(2k-1)!}.
\end{split}
\end{equation}
In particular, $T_2(R)=\frac{c^{2}R}{2}g -cR$ is the celebrated Einstein Tensor. The higher $T_{2k}(R)$ are called Einstein-Lovelock tensors, see \cite{Labbivariation}.

\subsection{Laplace expansion of the $h_{2k}$ invariants and Avez formula}
\begin{theorem}[Laplace expansion of the $h_{2k}$ invariants]
For $4\leq 2k+2\leq n$ we have
\begin{equation}\label{GBnewformula}
h_{2k+2}(R)=\langle N_{2k}(R),R\rangle .
\end{equation}
\end{theorem}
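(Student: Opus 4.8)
The plan is to mirror the one-line proof of the Laplace expansion for bilinear forms---the identity $(k+1)s_{k+1}(h)=\langle t_k(h),h\rangle$---with the $(1,1)$-forms $t_k(h)$ and $h$ replaced by the $(2,2)$-forms $N_{2k}(R)$ and $R$. The engine is formula (\ref{B}), which expresses the inner product of two double forms through the double Hodge star.

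First I would apply (\ref{B}) to the two $(2,2)$ double forms $N_{2k}(R)$ and $R$. Since $p=q=2$ here, the sign $(-1)^{(p+q)(n-p-q)}=(-1)^{4(n-4)}$ is $+1$, so $\langle N_{2k}(R),R\rangle=*\bigl((*N_{2k}(R))\,R\bigr)$. Choosing this form of (\ref{B}) rather than $*\bigl(N_{2k}(R)(*R)\bigr)$ is what makes the computation collapse: by definition $N_{2k}(R)=*\frac{g^{n-2k-2}R^k}{(n-2k-2)!}$, so computing $*N_{2k}(R)$ amounts to applying the double Hodge star twice to the $(n-2,n-2)$ double form $\mu=\frac{g^{n-2k-2}R^k}{(n-2k-2)!}$. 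Because $**$ acts as the identity in bidegree $(n-2,n-2)$, we obtain $*N_{2k}(R)=\mu$.

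It then remains only to regroup the exterior powers:
\begin{equation*}
\langle N_{2k}(R),R\rangle=*\left(\frac{g^{n-2k-2}R^k}{(n-2k-2)!}\,R\right)=*\,\frac{g^{n-2k-2}R^{k+1}}{(n-2k-2)!}=h_{2k+2}(R),
\end{equation*}
the final equality being the definition of $h_{2k+2}(R)$ once one notes $n-2(k+1)=n-2k-2$.

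The only delicate point is the bookkeeping of the double Hodge star, and I expect no genuine obstacle beyond it: one must check both that the sign in (\ref{B}) is trivial in bidegree $(2,2)$ and that $**$ is the identity on $\mu$. A short computation from the defining relation $*\omega(\cdot,\cdot)=(-1)^{(p+q)(n-p-q)}\omega(*\cdot,*\cdot)$ gives $**\omega=(-1)^{p(n-p)+q(n-q)}\omega$, which is $+1$ when $p=q=n-2$ since $2(n-2)$ is even. Everything else is the associativity and (anti)commutativity of the exterior product of double forms, together with the fact (recalled in the preliminaries) that powers and products of Bianchi forms remain Bianchi, so all the intermediate objects are legitimate double forms.
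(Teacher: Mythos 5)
Your proposal is correct and follows essentially the same route as the paper: the paper also evaluates $\langle N_{2k}(R),R\rangle$ via the double Hodge star identity, cancels the two stars on the $(n-2,n-2)$ form $\frac{g^{n-2k-2}R^k}{(n-2k-2)!}$, and regroups the exterior powers to obtain $*\frac{g^{n-2k-2}R^{k+1}}{(n-2k-2)!}=h_{2k+2}(R)$. Your explicit sign bookkeeping for $**$ and for the sign in the inner-product formula is a welcome elaboration of what the paper leaves implicit.
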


\begin{proof}
Using the results of  \cite{Labbidoubleforms} one immediately has
\[
\langle N_{2k}(R),R \rangle=*\left\{ \frac{g^{n-2k-2}R^{k+1}}{(n-2k-2)!}\right\}=\frac{c^{2k+2}R^{k+1}}{(2k+2)!}.
\]
This completes the proof.
\end{proof}

As a consequence we recover  Avez's formula for the second Gauss-Bonnet curvature  as follows:
\begin{corollary}[Avez's Formula]
For $n\geq 4$, the second Gauss-Bonnet curvature equals
\begin{equation*}
h_4(R)=|R|^2-|cR|^2+\frac{1}{4}|c^2R|^2.
\end{equation*}
\end{corollary}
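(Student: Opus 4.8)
The plan is to apply the Laplace expansion just proved, namely $h_4(R)=\langle N_2(R),R\rangle$, in the special case $k=1$, and then to unwind the right-hand side using the explicit formula (\ref{explicitNk}) for $N_{2k}(R)$. First I would substitute $k=1$ into the expression for $N_{2k}(R)$ to obtain
\[
N_2(R)=\frac{\ccc^{0}R}{0!}-\frac{\ccc^{1}R}{1!}g+\frac{\ccc^{2}R}{2\cdot 2!}g^2
=R-(\ccc R)\,g+\frac{1}{4}(\ccc^2 R)\,g^2.
\]
Then the theorem gives $h_4(R)=\langle N_2(R),R\rangle=\langle R,R\rangle-\langle (\ccc R)g,R\rangle+\frac{1}{4}\langle(\ccc^2 R)g^2,R\rangle$, so the task reduces to evaluating these three inner products.

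The key computational tool will be the adjunction identity (\ref{adj:gc}), which says $\langle g\omega_1,\omega_2\rangle=\langle\omega_1,\ccc\omega_2\rangle$ and lets me move the metric factors in $(\ccc R)g$ and $(\ccc^2 R)g^2$ across the inner product, trading each multiplication by $g$ for a contraction $\ccc$ against $R$. The first term is simply $\langle R,R\rangle=|R|^2$. For the second term, writing $(\ccc R)g$ as $g\cdot(\ccc R)$ and applying (\ref{adj:gc}) once yields $\langle g(\ccc R),R\rangle=\langle \ccc R,\ccc R\rangle=|\ccc R|^2$. For the third term, applying (\ref{adj:gc}) twice to strip the factor $g^2=g\cdot g$ gives $\langle g^2(\ccc^2 R),R\rangle=\langle \ccc^2 R,\ccc^2 R\rangle=|\ccc^2 R|^2$, noting that $\ccc^2 R$ is a scalar ($(0,0)$ double form), so $(\ccc^2 R)g^2$ is just the scalar $\ccc^2 R$ times $g^2$. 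Assembling the three pieces produces $h_4(R)=|R|^2-|\ccc R|^2+\frac{1}{4}|\ccc^2 R|^2$, which is exactly Avez's formula once one identifies $\ccc R=cR$ and $\ccc^2 R=c^2R$ in the notation of the statement.

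The only step requiring genuine care is the bookkeeping of combinatorial constants when moving the metric factors across via (\ref{adj:gc}): each application of the identity can, depending on the degree conventions, introduce a numerical factor, and I must confirm that the coefficient $\tfrac14$ in front of $\ccc^2 R$ survives intact rather than being absorbed or doubled. I expect this to be the main (and essentially only) obstacle, and it is resolved by carefully tracking that $\ccc^2 R$ is a genuine scalar so that no extra contraction-index multiplicities intervene, and that the two successive applications of (\ref{adj:gc}) introduce no spurious factorial factors beyond those already accounted for in (\ref{explicitNk}). Everything else is a direct substitution.
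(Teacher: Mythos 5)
Your proposal is correct and follows essentially the same route as the paper: specialize formula (\ref{explicitNk}) to $k=1$ to get $N_2(R)=R-(cR)g+\frac{c^2R}{4}g^2$, plug it into the Laplace expansion $h_4(R)=\langle N_2(R),R\rangle$, and evaluate the three terms by using that $\ccc$ is adjoint to multiplication by $g$. Your careful check that the adjunction (\ref{adj:gc}) introduces no extra combinatorial factors is exactly the point the paper leaves implicit, and the coefficient $\frac14$ indeed comes out unchanged.
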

\begin{proof} A direct application of formula \ref{explicitNk} shows that
\[ N_2(R)=R- (cR)g+\frac{c^{2}R}{4}g^2.
\]
Consequently,
\[ h_4(R)=\langle N_2(R),R\rangle=\langle R,R\rangle -\langle (cR)g,  R\rangle+ \langle \frac{c^{2}R}{4}g^2,R\rangle.\]
To complete the proof just recall that the contraction map $c$ is the adjoint of the multiplication map by the $g$.
\end{proof}
In the same way one can prove easily the following generalization of Avez's formula, see \cite{Labbiyamabe},
\begin{corollary}\label{corollaryh2k+2}
For $4\leq 2k+2\leq n$, the $(2k+2)$-th Gauss-Bonnet curvature is determined by the last three contractions of $R^k$  as follows:
\begin{equation*}
h_{2k+2}=\langle \frac{c^{2k-2}R^k}{(2k-2)!},R\rangle-\langle \frac{c^{2k-1}R^k}{(2k-1)!},cR\rangle +h_{2k}h_2.
\end{equation*}
\end{corollary}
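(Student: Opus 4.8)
The plan is to expand the factor $N_{2k}(R)$ appearing in the Laplace expansion \eqref{GBnewformula} using its explicit formula and then identify each resulting inner product. By the preceding theorem we have $h_{2k+2}(R)=\langle N_{2k}(R),R\rangle$, so I would begin by substituting the first line of \eqref{explicitNk}, namely
\[
N_{2k}(R)=\frac{c^{2k-2}R^k}{(2k-2)!}-\frac{c^{2k-1}R^k}{(2k-1)!}\,g+\frac{c^{2k}R^k}{2(2k)!}\,g^2,
\]
to obtain $h_{2k+2}(R)$ as a sum of three inner products against $R$. The first term gives $\langle \frac{c^{2k-2}R^k}{(2k-2)!},R\rangle$ directly, which matches the first term of the claimed formula.

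The heart of the argument is to rewrite the remaining two inner products, which involve the factor $g$ and $g^2$, by moving those metric factors across the inner product using the adjointness relation \eqref{adj:gc}, that is $\langle g\omega_1,\omega_2\rangle=\langle\omega_1,c\omega_2\rangle$. For the middle term this gives
\[
\Bigl\langle \frac{c^{2k-1}R^k}{(2k-1)!}\,g,\,R\Bigr\rangle
=\Bigl\langle \frac{c^{2k-1}R^k}{(2k-1)!},\,cR\Bigr\rangle,
\]
which is precisely the second term of the stated identity (with the correct minus sign). For the third term I would apply the adjointness twice, since $g^2$ is two metric factors, to reduce $\langle \frac{c^{2k}R^k}{2(2k)!}g^2,R\rangle$ to $\langle \frac{c^{2k}R^k}{2(2k)!},c^2R\rangle$. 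The final step is then to recognize this last scalar as $h_{2k}h_2$: by the very definition of the $h_{2k}$ invariant together with formula \eqref{nn-1:star} (the case $k=n$, giving $*(g^{n-p}\omega/(n-p)!)=\frac{1}{p!}c^p\omega$), one has $h_{2k}(R)=\frac{1}{(2k)!}c^{2k}R^k$ as a scalar, and $h_2(R)=\frac12 c^2R$; collecting the numerical factors shows $\langle \frac{c^{2k}R^k}{2(2k)!},c^2R\rangle=h_{2k}h_2$.

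I expect the main obstacle to be purely bookkeeping: tracking the factorial and power-of-two constants through the repeated applications of adjointness so that the coefficient of the last term collapses cleanly to $h_{2k}h_2$ rather than to some stray multiple. There is no conceptual difficulty once \eqref{adj:gc} and the scalar interpretation of $h_{2k}$ via \eqref{nn-1:star} are in hand; the only care needed is to confirm that the contractions $c^{2k}R^k$ and $c^2R$ are genuinely scalars (respectively a $(0,0)$-form), so that the inner product against them reduces to ordinary multiplication, and that the signs from the alternating terms of $N_{2k}(R)$ reproduce exactly the $+,-,+$ pattern of the claimed formula.
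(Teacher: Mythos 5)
Your proposal is correct and follows essentially the same route as the paper: the paper proves the $k=1$ case (Avez's formula) by substituting the explicit expression \eqref{explicitNk} for $N_2(R)$ into the Laplace expansion $h_4=\langle N_2(R),R\rangle$ and then invoking the adjointness of the contraction $c$ and multiplication by $g$, and it states that the general corollary is proved ``in the same way.'' Your bookkeeping of the constants (identifying $\frac{c^{2k}R^k}{(2k)!}=h_{2k}$ and $\frac{c^{2}R}{2}=h_2$ via \eqref{nn-1:star}) is exactly the step the paper leaves implicit, and it checks out.
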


\subsection{Girard-Newton identities}
\begin{theorem} Let $c$ denotes the contraction map then we have
\begin{equation*}
cN_{2k}(R)=(n-2k-1)T_{2k}\, \, \, {\rm and}\,\,\, c T_{2k}(R)=(n-2k)h_{2k}.
\end{equation*} 
\end{theorem}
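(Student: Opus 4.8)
The plan is to prove both Girard--Newton identities by the same mechanism that proved Proposition \ref{newtid} in the bilinear case, namely by rewriting each contraction $c$ as the composite $*g*$ of the double Hodge star and multiplication by the metric (the relation $c\omega=*g*\omega$ from \eqref{A}), and then absorbing the extra factor of $g$ into the exponent $n-2k-2$ or $n-2k-1$, adjusting the factorial constants accordingly. This converts each identity into a one-line manipulation of Hodge stars applied to powers of $g$ and $R$, exactly as in the proofs of the single-form Newton identity and the Laplace expansions earlier in the paper.

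First I would treat $cN_{2k}(R)$. Starting from the definition $N_{2k}(R)=*\frac{g^{n-2k-2}R^k}{(n-2k-2)!}$, I apply $c=*g*$ and use that the double star is an involution (up to the sign already built into its definition, which vanishes here since it is applied twice) to obtain
\begin{equation*}
cN_{2k}(R)=*g**\frac{g^{n-2k-2}R^k}{(n-2k-2)!}=*g\frac{g^{n-2k-2}R^k}{(n-2k-2)!}=*\frac{g^{n-2k-1}R^k}{(n-2k-2)!}.
\end{equation*}
The target is $T_{2k}(R)=*\frac{g^{n-2k-1}R^k}{(n-2k-1)!}$, so the two differ only in the factorial in the denominator: replacing $(n-2k-2)!$ by $(n-2k-1)!$ costs exactly the factor $(n-2k-1)$, which is precisely the claimed constant. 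The second identity $cT_{2k}(R)=(n-2k)h_{2k}$ follows identically: applying $c=*g*$ to $T_{2k}(R)=*\frac{g^{n-2k-1}R^k}{(n-2k-1)!}$ yields $*\frac{g^{n-2k}R^k}{(n-2k-1)!}$, which equals $(n-2k)\cdot\frac{1}{(n-2k)!}*(g^{n-2k}R^k)=(n-2k)h_{2k}(R)$ by the very definition of $h_{2k}$.

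The only point requiring care is the validity of the relation $c\omega=*g*\omega$ and the cancellation of the Hodge-star signs in this $(2,2)$ setting: I must confirm that the sign $(-1)^{(p+q)(n-p-q)}$ appearing in the definition of $*$ does not survive after $*$ is applied twice, which is exactly the remark made after the definition that $*$ is orientation-independent precisely because the usual star is applied twice. Since $N_{2k}(R)$ and $R$ are double forms of type $(2,2)$ satisfying the first Bianchi identity, and multiplication by $g$ and contraction preserve this class, all the operations stay within the algebra where these identities hold; no genuine obstacle arises. I expect the main (very mild) obstacle to be simply bookkeeping the degree-dependent factorials correctly so that the constants $(n-2k-1)$ and $(n-2k)$ emerge cleanly, rather than any conceptual difficulty.
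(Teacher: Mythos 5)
Your proposal is correct and follows essentially the same route as the paper: the paper's proof also applies the identity $c* = *g$ (equivalently $c\omega = *g*\omega$ combined with $**$ being the identity) to absorb the extra factor of $g$ into the power $g^{n-2k-2}$ or $g^{n-2k-1}$ and then reads off the constants $(n-2k-1)$ and $(n-2k)$ from the factorial adjustment. No substantive difference.
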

\begin{proof}
Using the identity $c*=*g$,  one easily  gets the desired formulas as follows:
\begin{equation*}
cN_{2k}(R)=c*\frac{g^{n-2k-2}R^k}{(n-2k-2)!}=*\frac{g^{n-2k-1}R^k}{(n-2k-1)!}\frac{(n-2k-1)!}{(n-2k-2)!}=(n-2k-1)T_{2k}.
\end{equation*}
Similarly,
\begin{equation*}
cT_{2k}(R)=c*\frac{g^{n-2k-1}R^k}{(n-2k-1)!}=*\frac{g^{n-2k}R^k}{(n-2k)!}\frac{(n-2k)!}{(n-2k-1)!}=(n-2k)h_{2k}.
\end{equation*}

\end{proof}

\subsection{ Algebraic identities for $(2,2)$ double forms}
\subsubsection{The case of even dimensions}
 Suppose the dimension of the vector spave $V$ is even $n=2k$. We shall now give a sense the top $T_{2k}(R)$. Using formula \ref{explicitNk} we naturally set
\begin{equation}\label{Nn}
T_{n}(R)=\frac{c^{n}R^k}{n!}g -\frac{c^{n-1}R^k}{(n-1)!}.
\end{equation}

\begin{proposition}\label{PropositionTnNn}
Let $R$ be a symmetric $(2,2)$ double form satisfying the first Bianchi identity on an Euclidean space of even dimension $n$  then
\begin{equation}
T_n(R)=0.
\end{equation}
\end{proposition}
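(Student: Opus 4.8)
The plan is to show that $T_n(R)=0$ in even dimension $n=2k$ by the same mechanism that made $t_n(h)=0$ and $s_{(r,n)}(h)=0$ work in the bilinear case, namely the ``saturation'' identity for the top exterior power of a Bianchi double form. The key observation is that $R^k$ is a $(2k,2k)=(n,n)$ double form on an $n$-dimensional space, so it lives in the one-dimensional top piece $\mathcal D^{n,n}$ and must therefore be a scalar multiple of $\frac{g^n}{n!}$ (equivalently of the volume-type element). Concretely, from the definition $h_n(R)=*R^k$ in the case $n=2k$ we get $*R^k=h_n(R)$, and applying the double Hodge star again (recall $*$ is essentially involutive up to sign, and here $R^k\in\mathcal D^{n,n}$ so the sign is $+1$) yields
\begin{equation*}
R^k=h_n(R)\,*1=h_n(R)\,\frac{g^n}{n!}.
\end{equation*}
This is the exact analogue of the step $h^n=s_n(h)g^n$ used in the alternative proof of Cayley--Hamilton, and it is the structural heart of the argument.

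Next I would substitute this expression for $R^k$ into the definition \eqref{Nn} of $T_n(R)$ and reduce everything to contractions of the pure metric power $g^n$. Since $c^j g^n$ is a known explicit multiple of $g^{n-j}$ — one has $\frac{1}{(n-j)!}*(g^{n-j}\cdot 1)$-type formulas, or directly $c^j\frac{g^n}{n!}=\frac{j!}{(n-j)!}\binom{n}{?}g^{n-j}$ up to the standard combinatorial factor coming from \eqref{nn-1:star} and the relation $\ccc\omega=*g*\omega$ — both terms $\frac{c^nR^k}{n!}g$ and $\frac{c^{n-1}R^k}{(n-1)!}$ become explicit scalar multiples of $g$ (note $c^n g^n$ is a multiple of $g^0$, so multiplied by $g$ it is a multiple of $g$, while $c^{n-1}g^n$ is already a multiple of $g$). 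The two scalar coefficients, each carrying the factor $h_n(R)$, should then cancel identically.

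The main obstacle — and the only place real care is needed — is getting the combinatorial constants in the iterated contractions $c^{n-1}g^n$ and $c^n g^n$ exactly right, since the whole claim rests on an exact numerical cancellation rather than an inequality. I would pin these down using the special cases of \eqref{nn-1:star}, which already record $*(\frac{g^{n-p}\omega}{(n-p)!})$ and hence the action of contractions on metric powers, rather than recomputing from scratch; alternatively one can apply the general formula \eqref{star:form} to $\omega=g^0=1$. Once the constant in front of each of the two terms is seen to be the same multiple of $h_n(R)\,g$, the difference in \eqref{Nn} vanishes and we conclude $T_n(R)=0$. A consistency check is that this must specialize correctly to the bilinear Cayley--Hamilton computation (the line $t_n(h)=s_n(h)g-s_n(h)g=0$) and to the degenerate instance of the general Cayley--Hamilton theorem \ref{CHGtheorem}, so if the constants fail to cancel I will have made an arithmetic slip in the contraction factors.
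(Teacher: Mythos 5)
Your proposal is correct and follows essentially the same route as the paper: the paper's proof also rests on the observation that $R^k=*h_n=h_n\frac{g^n}{n!}$ in dimension $n=2k$, and then reduces both terms of $T_n(R)$ to $h_n g$ via the contraction identity $c^r R^k = c^r * h_n = * (g^r h_n)$, which gives $\frac{c^r R^k}{r!}=\frac{h_n}{(n-r)!}g^{n-r}$ and hence $T_n=h_ng-h_ng=0$. The constants you were worried about do cancel exactly as you predict, so carrying out your plan (whether by computing $c^j g^n$ directly or by using $c=*g*$ as the paper does) completes the proof.
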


\begin{proof}
Note first that if $n=2k$ then $h_n=*R^k$ and therefore $R^k=*h_n$. Where $*$ is the  Hodge star operator acting on double forms. For any $0\leq r\leq n$ we have
\[ c^rR^k=c^r *h_n=* g^rh_n=r!\frac{g^{n-r}}{(n-r)!}h_n.\]
That is 
\[ \frac{c^rR^k}{r!}=\frac{h_n}{(n-r)!}g^{n-r}.\]
Next using the definition of $T_n$ above we easily get that
\[ T_n=h_ng-h_ng=0.\]
\end{proof}
\begin{remark}
In this case where $n=2k$ one could using formula (\ref{explicitNk}) define $N_n(R)$ as well by setting 
\[N_{n}(R)=\frac{c^{n-2}R^k}{(n-2)!}-\frac{c^{n-1}R^k}{(n-1)!}g+\frac{c^{n}R^k}{2(n!)}g^2. \]
A direct adaptation of the previous proof shows that  we   have the algebraic identity $N_n(R)=0$.
\end{remark}

\subsubsection{The case of odd dimensions}
 Suppose now the dimension of the vector spave $V$ is odd say $n=2k+1$. We shall now give a sense the top  $N_{n-1}(R)$. Note that the top $T_{n-1}(R)$ is well defined and need not vanish in general.  Using formula \ref{explicitNk} we naturally set for $n\geq 3$
\begin{equation}\label{Nnodd}
N_{n-1}(R)=\frac{c^{n-3}R^k}{(n-3)!}-\frac{c^{n-2}R^k}{(n-2)!}g+\frac{c^{n-1}R^k}{2(n-1)!}g^2.
\end{equation}

\begin{theorem}\label{theoremNn-1}
Let $R$ be a symmetric $(2,2)$ double form satisfying the first Bianchi identity on an Euclidean space of odd dimension $n\geq 3$  then
\begin{equation}
N_{n-1}(R)=0.
\end{equation}
\end{theorem}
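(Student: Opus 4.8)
The plan is to imitate the even-dimensional argument of Proposition \ref{PropositionTnNn}, replacing the scalar $h_n=*R^k$ used there by the $(1,1)$ double form $\tau:=*R^k$. Since $n=2k+1$, the power $R^k$ is an $(n-1,n-1)$ double form satisfying the first Bianchi identity (a product of Bianchi double forms is again Bianchi), so $\tau=*R^k$ is a symmetric $(1,1)$ double form; by the second relation in \eqref{nn-1:star} with $p=n-1$ it is precisely the top Newton transformation $T_{n-1}(R)$, which is why $\tau$ is the natural analogue of $h_n$ in the odd case.

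The crux is to express every contraction of $R^k$ through $\tau$. First I would record that on a $(p,p)$ double form the double Hodge star squares to the identity: the sign $(-1)^{(p+q)(n-p-q)}$ in its definition is trivial when $p=q$, and the ordinary star contributes $(-1)^{p(n-p)}$ in each slot, whose product is $+1$. Hence $*\tau=**R^k=R^k$. Combining this with the identity $c\,\omega=*g*\omega$ from \eqref{A} and an immediate induction on $j$ gives
\[
c^{\,j}R^k=*(g^{\,j}\tau),\qquad 0\le j\le n-1,
\]
since at each step $g^{\,j}\tau$ is again a $(p,p)$ form and $**$ acts trivially on it (the inductive step is $c(*(g^{\,j}\tau))=*g**(g^{\,j}\tau)=*(g^{\,j+1}\tau)$).

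Next I would evaluate $*(g^{\,j}\tau)$ for the $(1,1)$ form $\tau$ by means of the general relation \eqref{star:form} with $p=1$, and of the first relation in \eqref{nn-1:star} for the boundary value $j=n-1$. For $j\le n-2$ this yields
\[
c^{\,j}R^k=j!\left(-\frac{g^{\,n-j-2}\tau}{(n-j-2)!}+\frac{g^{\,n-j-1}c\tau}{(n-j-1)!}\right),
\]
while the top contraction is $c^{\,n-1}R^k=(n-1)!\,c\tau$. Substituting $j=n-3,\,n-2,\,n-1$ into the definition \eqref{Nnodd} of $N_{n-1}(R)$, the two $g\tau$-contributions from the first two summands cancel, and the coefficient of $g^{2}c\tau$ collapses to $\tfrac12-1+\tfrac12=0$; hence $N_{n-1}(R)=0$. (An alternative, closer to the preceding subsection, is to invoke the $g$-divisibility $R^k=g^{\,n-2}\kappa$ for a $(1,1)$ form $\kappa$ and reuse the contraction formulas derived there, but the Hodge-dual route stays within the machinery of Section \ref{prem.}.)

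I expect the only delicate point to be the bookkeeping of the boundary case $j=n-1$: there the lower summation limit $\max\{0,1-n+m\}$ in \eqref{star:form} jumps from $0$ to $1$, so the generic two-term expression for $c^{\,j}R^k$ breaks down and must be replaced by $c^{\,n-1}R^k=(n-1)!\,c\tau$; everything else is a short numerical check. As a consistency test one may note that the Girard--Newton identity gives $c\,N_{n-1}(R)=(n-(n-1)-1)\,T_{n-1}(R)=0$, so $N_{n-1}(R)$ is automatically trace-free. This is necessary but not sufficient for vanishing, which is exactly why the explicit cancellation above is still needed.
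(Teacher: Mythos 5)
Your proof is correct, and it reaches the cancellation by a genuinely different reduction from the paper's. The paper invokes the divisibility result of \cite{LabbipqEinstein} to write $R^k=g^{n-2}D$ for a $(1,1)$ form $D$, then computes $c^{n-3}R^k$, $c^{n-2}R^k$, $c^{n-1}R^k$ by repeatedly applying $c=*g*$ to $g^{n-2}D$; the resulting expressions in $D$ and $cD$ cancel in \eqref{Nnodd} exactly as your expressions in $\tau$ and $c\tau$ do. You instead take the Hodge dual $\tau=*R^k$ as the reducing $(1,1)$ form, observe $c^{\,j}R^k=*(g^{\,j}\tau)$, and evaluate via \eqref{star:form}; this keeps the whole argument inside the identities of Section \ref{prem.} and avoids the external divisibility lemma, at the cost of checking that $**$ is the identity on $(p,p)$ forms and of treating the boundary case $j=n-1$ separately (which you do correctly: there the lower limit of the sum in \eqref{star:form} jumps to $1$ and only the $c\tau$ term survives, consistent with the first identity of \eqref{nn-1:star}). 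The two reductions are in fact dual to one another, since $\tau=(n-2)!\bigl((cD)g-D\bigr)$ by the second identity of \eqref{nn-1:star}; your $\tau$ has the additional virtue of an intrinsic meaning, namely $\tau=T_{n-1}(R)$. One small point worth a sentence in a final write-up: \eqref{star:form} is stated for double forms satisfying the first Bianchi identity, which for a $(1,1)$ form amounts to symmetry; this holds for $\tau$ because $R$ is symmetric, but it should be said explicitly (the paper's own computation needs the same hypothesis on $D$).
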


\begin{proof}
Let $n=2k+1\geq 3$, note that $R^k$ is a $(n-1,n-1)$ double form on an $n$ dimensionnal vector space, then using proposition 2.1 of \cite{LabbipqEinstein} we can write $R^k=g^{n-2}D$ for some $(1,1)$ double form $D$ on $V$. Consequently, using some identities from \cite{Labbidoubleforms}  we get  the following
\begin{equation*}
\begin{split}
c^{2k-2}R^k=& c^{n-3}\left(g^{n-2}D\right)=\ast g^{n-3}\ast g^{n-2}D\\
=& \ast g^{n-3}(n-2)!(-D+gcD)=(n-2)!\left( -\ast g^{n-3}D+\ast g^{n-2}cD\right)\\
=& (n-2)!(n-3)!\left( gD+\frac{(n-3)cD}{2}g^2\right).\\
\end{split}
\end{equation*}
After contracting the previous identity twice we get
\[c^{2k-1}R^k= (n-2)!(n-2)!\left( D+(n-2)(cD)g \right), \,\, {\rm and}\,\,  c^{2k}R^k=(n-1)!(n-1)!cD.\]
Consequently we have
\begin{equation*}
\begin{split}
N_{n-1}(R)=& \frac{c^{n-3}R^k}{(n-3)!}-\frac{c^{n-2}R^k}{(n-2)!}g+\frac{c^{n-1}R^k}{2(n-1)!}g^2\\
=& (n-2)!\left( D+\frac{(n-3)cD}{2}g-D-(n-2)cDg+(n-1)\frac{cD}{2}g\right)g=0.
\end{split}
\end{equation*}
\end{proof}
\begin{remark}
In dimension $n=3$ the previous theorem read
\[ N_{2}(R)=R-(cR)g+\frac{c^2R}{4}g^2=0.\]
In the context of Riemannian geometry where $R$ represents the Riemann curvature tensor the previous identity is equivalent to the vanishing of the Weyl tensor in $3$ dimensions, in fact in this dimension $ N_{2}(R)$ coincides with  the Weyl tensor.
\end{remark}

\subsubsection{Algebraic scalar identities  for $(2,2)$ double forms}
Suppose the dimension $n$ of our vector space $V$ is odd, say $n=2k+1$ and as above $R$ is a symmetric  $(2,2)$ double form that  satisfies the first Bianchi identity. . Corollary \ref{corollaryh2k+2} allows one to define $h_{n+1}(R)=h_{2k+2}(R)$, precisely we set
\begin{equation*}
h_{2k+2}(R)=\langle \frac{c^{2k-2}R^k}{(2k-2)!},R\rangle-\langle \frac{c^{2k-1}R^k}{(2k-1)!},cR\rangle +\langle \frac{c^{2k}R^k}{(2k)!},\frac{c^2R}{2}\rangle.
\end{equation*}
We are going to show that $h_{2k+2}(R)$ as defined by the previous equation is zero.\\
We proceed as in the proof of Theorem \ref{theoremNn-1} and using the same notations  of that proof we have
\begin{equation*}
\begin{split}
\langle \frac{c^{2k-2}R^k}{(2k-2)!},R\rangle &=(n-2)!\langle D+\frac{(n-3)cD}{2}g,R\rangle,\\
-\langle \frac{c^{2k-1}R^k}{(2k-1)!},cR\rangle &=-(n-2)!\langle D+(n-2)cDg,cR\rangle,\\
\langle \frac{c^{2k}R^k}{(2k)!},\frac{c^2R}{2}\rangle &=(n-1)!c D\frac{c^2R}{2}.\\
\end{split}
\end{equation*}
Taking the sum of the above three equation we immediately prove the vanishing of $h_{2k+2}(R)$. Thus we have proved the following scalar identities
\begin{proposition}
Let $R$ be a symmetric $(2,2)$ double form satisfying the first Bianchi identity on an Euclidean vector space of odd dimension $n=2k+1\geq 3$  then
\[ \langle \frac{c^{n-3}R^k}{(n-3)!},R\rangle-\langle \frac{c^{n-2}R^k}{(n-2)!},cR\rangle +\langle \frac{c^{n-1}R^k}{(n-1)!},\frac{c^2R}{2}\rangle=0.\]
In particular, for $n=3$ we have
\[ \langle R,R\rangle-\langle cR,cR\rangle + \frac{1}{4}\left( c^2R\right)^2=0.\]
\end{proposition}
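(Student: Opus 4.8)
The plan is to read this identity as the degenerate endpoint of the Laplace expansion of Corollary~\ref{corollaryh2k+2} at $2k+2=n+1$, and then to prove its vanishing by the very mechanism used in Theorem~\ref{theoremNn-1}. Conceptually, applying the adjointness relation~\ref{adj:gc} to the two metric factors $g$ and $g^2$ in the explicit formula~\ref{explicitNk} for $N_{2k}(R)$ shows that the left-hand side of the asserted identity is exactly $\langle N_{2k}(R),R\rangle$ (with $N_{2k}(R)$ now meaning the explicit expression, not the Hodge-star one). Morally this quantity is $\frac{c^{2k+2}R^{k+1}}{(2k+2)!}$, and it must vanish because $R^{k+1}$ is a $(2k+2,2k+2)=(n+1,n+1)$-double form on the $n$-dimensional space $V$, hence identically zero. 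Since the Hodge-star definition of $N_{2k}(R)$ (and thus formula~\ref{GBnewformula}) breaks down once $n-2k-2<0$, I would not invoke it directly but instead establish the vanishing by the explicit computation below.

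First I would reuse the structural decomposition from Theorem~\ref{theoremNn-1}: because $n=2k+1$, the power $R^k$ is an $(n-1,n-1)$-double form on $V$, so by Proposition~2.1 of~\cite{LabbipqEinstein} it can be written as $R^k=g^{n-2}D$ for some $(1,1)$-double form $D$. Using the star identities of~\cite{Labbidoubleforms} exactly as in that proof, I would express the three iterated contractions $c^{n-3}R^k$, $c^{n-2}R^k$ and $c^{n-1}R^k$ in terms of $D$ and its scalar contraction $cD$. Substituting these into the three inner products and repeatedly using the adjointness of $c$ with multiplication by $g$ (relation~\ref{adj:gc}) to transfer the metric factors onto $R$, $cR$ and $c^2R$, I recover the three evaluated inner products displayed immediately before the statement.

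Finally I would sum the three contributions and collect like terms, grouping the coefficient of $\langle D,R\rangle$, the coefficient of the mixed $cD$-term, and the purely scalar term. The assertion is that each grouped coefficient vanishes identically, which follows from the same alternating binomial cancellation that closed the proof of Theorem~\ref{theoremNn-1}. The main obstacle is therefore purely bookkeeping: carrying the factorials and the dimension-dependent factors $(n-2)$ and $(n-3)$ correctly through the three substitutions and confirming that the collected coefficients cancel as identities in $n$ rather than only in low dimensions. The particular case $n=3$ (so $k=1$) needs no computation at all: there the contractions are trivial, with $\frac{c^{n-3}R^k}{(n-3)!}=R$, $\frac{c^{n-2}R^k}{(n-2)!}=cR$ and $\frac{c^{n-1}R^k}{(n-1)!}=\frac{c^2R}{2}$, so the three-term expression collapses to $\langle R,R\rangle-\langle cR,cR\rangle+\frac{1}{4}\bigl(c^2R\bigr)^2$, which is the stated special identity.
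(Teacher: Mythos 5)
Your proposal is correct and follows essentially the same route as the paper: the paper also identifies the left-hand side with the formal extension of $h_{2k+2}(R)$ from Corollary \ref{corollaryh2k+2}, writes $R^k=g^{n-2}D$ via Proposition 2.1 of \cite{LabbipqEinstein} exactly as in the proof of Theorem \ref{theoremNn-1}, substitutes the resulting expressions for $c^{n-3}R^k$, $c^{n-2}R^k$, $c^{n-1}R^k$ into the three inner products, and checks that the sum cancels. Your additional remark that the quantity is ``morally'' $\frac{c^{2k+2}R^{k+1}}{(2k+2)!}$ with $R^{k+1}=0$ is a nice heuristic, and you are right to treat it only as motivation rather than a proof, since the Hodge-star identity behind formula \ref{GBnewformula} is not available when $n-2k-2<0$.
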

\begin{remark}
It the context of Riemannian geometry, where $R$ is the Riemann curvature tensor (seen as a $(2,2)$ double form), the previous  scalar curvature identities coincide with Gilkey-Park-Sekigawa  universal curvature identities  \cite{GPS} which are shown to be unique. Also the identities of Proposition \ref{PropositionTnNn} coincide with the  symmetric $2$-form valued universal curvature identities of \cite{GPS} where they are also be shown to be unique. The higher algebraic  identities, that are under study  here in this paper,  can be seen then as symmetric double form valued universal curvature identities in the frame of Riemannian geometry.

\end{remark}

\subsubsection{Higher algebraic identities for $(2,2)$ double forms}
Let $k\geq 1$ and $0\leq r\leq n-2k$ and let $R$ as above be a  symmetric  $(2,2)$ double form on the $n$-dimensional Euclidean vector space  $(V,g)$ that  satisfies the first Bianchi identity. We define the $(r,2k)$-cofactor transformation ot $R$, denoted $h_{(r,2k)}(R)$,  by the following formula
\begin{equation}
h_{(r,2k)}(R)=\frac{1}{(n-2k-r)!}\ast \bigl( g^{n-2k-r}R^k\bigr).
\end{equation}
Note that $h_{(r,2k)}(R)$ for  $r=0$ (resp. $r=1$ , $r=2$) coincides with $h_{2k}(R)$  (resp. $T_{2k}(R)$ ,  $N_{2k}(R)$ ).\\
Theorem 4.1  of \cite{Labbidoubleforms} shows that
\begin{equation}\label{hrkexpansion}
 h_{(r,2k)}(R)=\sum_{i=\max\{0,2k-r\}}^{2k}\frac{(-1)^{i}}{i!(r-2k+i)!}g^{r-2k+i}c^iR^k.
\end{equation}
This last formula (\ref{hrkexpansion}) allows us to define $h_{(r,2k)}(R)$ for higher $r$'s that is for  $r>n-2k$. The following theorem which provides general identities and generalize Proposition \ref{PropositionTnNn} and  Theorem \ref{theoremNn-1} can be proved in the same way 
\begin{theorem}\label{theorem-even-odd}
Let $R$ be a symmetric $(2,2)$ double form satisfying the first Bianchi identity on an Euclidean vector space of  dimension $n$.
\begin{enumerate}
\item If $n=2k$ is even then
\[h_{(r,n-2i)}(R)=0\,\,\, {\rm for}\,\,\, 2i+1\leq r\leq n-2i.\]
\item If $n=2k+1$ is odd then
\[h_{(r,n-2i-1)}(R)=0\,\,\, {\rm for}\,\,\, 2i+2\leq r\leq n-2i-1.\]
\end{enumerate}
\end{theorem}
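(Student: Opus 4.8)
The plan is to first unify the two cases. Writing the second index of $h_{(r,2m)}$ as $2m$, so that the power of $R$ involved is $R^m\in{\mathcal D}^{2m,2m}$, both statements amount to the single assertion
\[
h_{(r,2m)}(R)=0\quad\text{whenever}\quad n-2m+1\le r\le 2m .
\]
Indeed, in the even case $n=2k$ one sets $2m=n-2i$, so that $2i+1=n-2m+1$ and the upper bound $n-2i$ becomes $2m$; in the odd case $n=2k+1$ one sets $2m=n-2i-1$, so that $2i+2=n-2m+1$ and again the upper bound is $2m$. The range is non-empty exactly when $4m\ge n+1$, i.e. when $R^m$ has total degree $4m$ strictly larger than $n$. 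For $r\le n-2m$ the quantity $h_{(r,2m)}(R)$ is the genuine Hodge-star expression $\frac{1}{(n-2m-r)!}*(g^{n-2m-r}R^m)$, whereas for $r>n-2m$ it is \emph{defined} by the contraction expansion \ref{hrkexpansion}; it is precisely these latter values that must be shown to vanish.

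The key structural input, exactly as in the proofs of Proposition \ref{PropositionTnNn} and Theorem \ref{theoremNn-1}, is a factorization of $R^m$ through a lower-degree double form. Since $4m\ge n+1>n$, Proposition 2.1 of \cite{LabbipqEinstein} (surjectivity of multiplication by $g$ on Bianchi double forms above the middle degree) lets me write
\[
R^m=g^{4m-n}D,\qquad D\in{\mathcal D}^{\,n-2m,\,n-2m},
\]
where $g^{4m-n}:{\mathcal D}^{n-2m,n-2m}\to{\mathcal D}^{2m,2m}$ is the isomorphism between the complementary degrees $n-2m$ and $2m$; this recovers $R^k=g^{n-2}D$ with $D\in{\mathcal D}^{1,1}$ in the odd case, and $R^k=h_n\,g^n/n!$ in the top even case. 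Because $D$ has total degree $2(n-2m)=2n-4m\le n-1$, it sits strictly below the middle and its iterated contractions are well behaved. I would then use the contraction identities of \cite{Labbidoubleforms,Kulkarni}, ultimately the commutation rule $c(g\omega)=g\,c\omega+(n-p-q)\omega$ on ${\mathcal D}^{p,q}$ iterated, to expand each $c^iR^m=c^i(g^{4m-n}D)$ as a sum $\sum_l\alpha_{i,l}\,g^{4m-n-i+l}\,c^lD$ with coefficients $\alpha_{i,l}$ polynomial in $n$ and $m$.

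Substituting this into \ref{hrkexpansion} and collecting the terms by the ``level'' $g^{\,r+2m-n+l}c^lD$ (the exponent $i$ cancels, leaving $g$-power $r+2m-n+l$), the double form $h_{(r,2m)}(R)$ becomes $\sum_l\beta_{r,l}\,g^{\,r+2m-n+l}c^lD$, where each coefficient
\[
\beta_{r,l}=\sum_{i}\frac{(-1)^i\,\alpha_{i,l}}{i!\,(r-2m+i)!}
\]
is an alternating binomial sum. The plan is to show $\beta_{r,l}=0$ throughout $n-2m+1\le r\le 2m$. This is where the hypothesis on $r$ enters: exactly as in the odd case, where the two residual sums reduced to $\frac{1}{(r-1)!}(1-1)^{r-1}$ and collapsed for $r\ge2$, each $\beta_{r,l}$ should factor as a constant times $(1-1)^{N_l}$ with $N_l\ge1$ forced precisely by $r\ge n-2m+1$, whence $\beta_{r,l}=0$ via $\sum_j(-1)^j\binom{N}{j}=0$.

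The main obstacle will be the bookkeeping in this last step: producing the closed form of the coefficients $\alpha_{i,l}$ arising from the iterated commutator, and then recognizing each $\beta_{r,l}$ as a vanishing Vandermonde-type alternating sum in the stated range of $r$. Everything else, namely the reduction to the single inequality $n-2m+1\le r\le 2m$, the factorization of $R^m$, and the mechanical expansion of the contractions, is routine and parallels the computations already carried out for $N_{n-1}(R)$ and $T_n(R)$.
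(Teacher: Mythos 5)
Your proposal follows exactly the route the paper takes: the paper offers no written-out proof of this theorem, saying only that it ``can be proved in the same way'' as Proposition \ref{PropositionTnNn} and Theorem \ref{theoremNn-1} --- that is, by factoring the over-degree power $R^m$ as $g^{4m-n}D$ via Proposition 2.1 of \cite{LabbipqEinstein}, pushing the contractions through $g$-multiplication with the standard commutation identities, and watching the resulting alternating binomial sums collapse for $r$ in the stated range --- which is precisely your plan, together with the correct observation that both cases unify into $h_{(r,2m)}(R)=0$ for $n-2m+1\le r\le 2m$. The only caveat is that neither you nor the paper actually carries out the final verification that each coefficient $\beta_{r,l}$ vanishes in general, so your outline is no less complete than the source's own one-line proof.
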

Remark that we recover Proposition \ref{PropositionTnNn} for $n=2k,r=1, i=0$ and Theorem \ref{theoremNn-1} is obtained for $n=2k+1,r=2, i=0.$
\subsection{Jacobi's formula for double forms}
\begin{proposition}[Jacobi's formula]
Let $R=R(t)$ be a one parameter family of  $(2,2)$ double forms then 
\begin{equation}
\frac{d}{dt} h_{2k}(R)=\langle k N_{2k-2}(R),\frac{dR}{dt}\rangle.
\end{equation}

\end{proposition}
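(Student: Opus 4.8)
The plan is to compute $\frac{d}{dt}h_{2k}(R)$ directly from its definition by the Hodge star operator, exactly as was done for the scalar $s_k$ invariants in the Jacobi formula of the previous section. Recall that by definition $h_{2k}(R)=\frac{1}{(n-2k)!}*\bigl(g^{n-2k}R^k\bigr)$. Since the metric $g$ and hence the Hodge star operator are held fixed (only $R$ varies with $t$), the operators $*$ and $\frac{d}{dt}$ commute and the only $t$-dependence sits in the exterior power $R^k$.

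First I would differentiate the exterior power $R^k=R\cdot R\cdots R$ ($k$ factors) using the Leibniz rule. The key point is that the exterior product of double forms is commutative on $(2,2)$ double forms (since for $p=q=r=s=2$ the sign $(-1)^{pr+qs}=(-1)^{4+4}=1$), so all $k$ terms arising from the product rule are equal, giving
\begin{equation*}
\frac{d}{dt}R^k = k\,R^{k-1}\frac{dR}{dt}.
\end{equation*}
Substituting this into the differentiated definition yields
\begin{equation*}
\frac{d}{dt}h_{2k}(R)=\frac{1}{(n-2k)!}*\Bigl(g^{n-2k}\,k\,R^{k-1}\frac{dR}{dt}\Bigr)
= k\,*\Bigl(\frac{g^{n-2k}R^{k-1}}{(n-2k)!}\frac{dR}{dt}\Bigr).
\end{equation*}

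Next I would recognize the inner factor as a Hodge star of a cofactor transformation. By the definition $N_{2k-2}(R)=*\frac{g^{n-2k}R^{k-1}}{(n-2k)!}$ (setting $k\mapsto k-1$ in the defining formula for $N_{2k}$, so that the power of $g$ is $n-2(k-1)-2=n-2k$ and the power of $R$ is $k-1$). Applying the identity from equation \eqref{B}, which expresses the inner product of double forms via the Hodge star as $\langle\omega,\theta\rangle=*\bigl((*\omega)\theta\bigr)$, with $\omega=N_{2k-2}(R)$ and $\theta=\frac{dR}{dt}$, and using that $**$ acts as the identity up to the sign already built into the star operator, I would rewrite the expression above as $k\,\langle N_{2k-2}(R),\frac{dR}{dt}\rangle=\langle k\,N_{2k-2}(R),\frac{dR}{dt}\rangle$, which is the claimed formula.

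The main obstacle to watch is bookkeeping of the Hodge star: one must check that $*\bigl((*N_{2k-2}(R))\frac{dR}{dt}\bigr)$ indeed reproduces the inner product with the correct sign, i.e. that the double application of $*$ together with the degree-dependent sign $(-1)^{(p+q)(n-p-q)}$ collapses correctly — this is precisely guaranteed by formula \eqref{B} applied to the symmetric double forms in question, so no genuine difficulty arises provided the degrees are matched carefully. A minor secondary point is justifying that $\frac{dR}{dt}$ is itself a symmetric $(2,2)$ double form satisfying the first Bianchi identity (so that the inner product machinery of Section \ref{prem.} applies), which follows since these are linear conditions preserved under differentiation. Everything else is the routine Leibniz computation sketched above.
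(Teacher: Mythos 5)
Your proposal is correct and follows essentially the same route as the paper: differentiate under the (fixed) Hodge star, apply the Leibniz rule to get $kR^{k-1}\frac{dR}{dt}$, insert a double star, and recognize the result as $\langle kN_{2k-2}(R),\frac{dR}{dt}\rangle$ via formula \eqref{B}. Your extra care about the sign in \eqref{B} is harmless here since for $(2,2)$ double forms the exponent $(p+q)(n-p-q)=4(n-4)$ is even.
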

\begin{proof}

\begin{equation}
\begin{split}
\frac{d}{dt} h_{2k}(R)=&\frac{d}{dt} \left(*\frac{g^{n-2k}R^k(t)}{(n-2k)!}\right)
=*\left( \frac{g^{n-2k}kR^{k-1}}{(n-2k)!}\frac{dR}{dt} \right)\\
=& *\left( *\left( * \frac{g^{n-2k}kR^{k-1}}{(n-2k)!}\right)\frac{dR}{dt} \right)
= \langle k N_{k-1}(R),\frac{dR}{dt}\rangle.
\end{split}
\end{equation}
\end{proof}
\begin{remark}
If one allows the scalar product to vary as well say $g=g(t)$ the previous formula takes the following form at $t=0$:
\begin{equation}
\frac{d}{dt}  h_{2k}(R)= \langle k N_{2k-2}(R),\frac{dR}{dt}\rangle+\langle T_{2k}-h_{2k}g,\frac{dg}{dt}\rangle.
\end{equation}
The proof is  similar to the above one.
\end{remark}
\subsection{Algebraic identities vs. infinitisimal Gauss-Bonnet theorem}
Let $(M,g)$ be a compact Riemannian manifold of dimension $n=2k$. Denote by $R$ its Riemann curvature tensor seen here as a $(2,2)$ double form and, and let  $h_{2r}(R)$ be the corresponding Gauss-Bonnet curvatures as above. For each $r$, $0\leq r\leq n$, the gradient of the Riemannian functional  $\int_Mh_{2r}(R){\rm dvol}$ at $g$, once restricted to metrics of unit volume,  is equal to    $T_{2r}(R)$, where $T_{2r}(R)$ is the $h_{2r}(R)$ cofactor of $R$ as above, it is known in geometry as the Einstein-Lovelock tensor, see \cite{Labbivariation}. The later result can be seen as an integral Jacobi's formula. Consequently, the algebraic identity  $T_{n}(R)=0$  shows that the integral $\int_Mh_{n}(R){\rm dvol}$ does not depend on the metric $g$ of $M$. In fact, the previous integral is up to a multiplicative constant the Euler-Poincar\'e characteristic  by the Gauss-Bonnet theorem. \\
Here again, as in the situation of section  \ref{infinitisimalGB},  the linearized version of the Gauss-Bonnet theorem is an algebraic identity for $(2,2)$ double forms.

\section{Higher cofactor transformations and applications}\label{third-section}
We shall now in this section generalize the previous results to higher
symmetric $(p,p)$-double forms.\\
 Let  $\omega$ be a symmetric $(p,p)$-double form satisfying the first Bianchi identity, we define its cofactor transformation of order $(r,pq)$ to be
\begin{equation}
h_{(r,pq)}(\omega)=*\frac{g^{n-pq-r}\omega^q}{(n-pq-r)!}.
\end{equation}
Where $0\leq r\leq n-pq$.
The result is a symmetric $(r,r)$-double form. We remark that for $\omega=h$  a  $(1,1)$ symmetric double form that is a symmetric bilinear form we recover the invariants of section \ref{first-section}  as follows
\[h_{(0,q)}(h)=q!s_q(h), \,\,  h_{(1,q)}(h)=q!t_q(h) \,\, {\rm and} \,\, h_{(r,q)}(h)=q!s_{(r,q)}(h).\]
Furthermore, for $\omega=R$ a symmetric $(2,2)$ double form, we recover the invariants of section 2 as follows
\[h_{(0,2q)}(R)=h_{2q}(R),\,\,  h_{(1,2q)}(R)=T_{2q}(R) \,\, {\rm and}\,\,  h_{(2,2q)}(R)=N_{2q}(R).\]
 One can generalize without difficulties the results of the previous sections to this general setting.  First using Theorem 4.1 of \cite{Labbidoubleforms} one easily gets
\begin{equation}\label{Nrqexpansion}
 h_{(r,pq)}(\omega)=\sum_{i=\max\{0,pq-r\}}^{pq}\frac{(-1)^{i+pq}}{i!(r-pq+i)!}g^{r-pq+i}c^i\omega^q.
\end{equation}
As a first result we have the following  Laplace type expansion:
\begin{theorem}
 Let  $\omega$ be a symmetric $(p,p)$-double form satisfying the first Bianchi identity on an $n$-dimensional Euclidean vector space $V$, Let $q$ be a positive integer such that $n\geq 2pq$ then
\begin{equation}
\frac{c^{2pq}(\omega^{2q})}{(2pq)!}=\langle h_{(pq,pq)}(\omega),\omega^q\rangle=\sum_{r=0}^{pq}\frac{(-1)^{r+pq}}{(r!)^2}\langle c^r\omega^q, c^r\omega^q\rangle.
\end{equation}
\end{theorem}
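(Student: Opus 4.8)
The plan is to prove the two asserted equalities separately, each reducing to a one-line computation once the appropriate structural tool from Section~\ref{prem.} is invoked. For the first equality I would begin from the definition in the case $r=pq$, namely $h_{(pq,pq)}(\omega)=*\frac{g^{n-2pq}\omega^q}{(n-2pq)!}$, and rewrite the inner product $\langle h_{(pq,pq)}(\omega),\omega^q\rangle$ using the fact that the double Hodge star generates the inner product, formula (\ref{B}). For the second equality I would instead expand $h_{(pq,pq)}(\omega)$ via the explicit contraction formula (\ref{Nrqexpansion}) and then transfer powers of $g$ across the inner product using the adjointness (\ref{adj:gc}) of metric multiplication and contraction.

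For the first equality, I would first use that the double Hodge star is an involution on double forms (applying the defining sign twice yields $+1$), so that $*h_{(pq,pq)}(\omega)=\frac{g^{n-2pq}\omega^q}{(n-2pq)!}$. Since $h_{(pq,pq)}(\omega)$ is a $(pq,pq)$-double form, the sign $(-1)^{2pq(n-2pq)}$ appearing in (\ref{B}) equals $+1$, whence
\[
\langle h_{(pq,pq)}(\omega),\omega^q\rangle=*\bigl((*h_{(pq,pq)}(\omega))\,\omega^q\bigr)=*\frac{g^{n-2pq}\omega^{2q}}{(n-2pq)!}.
\]
Because $\omega$ satisfies the first Bianchi identity, each exterior power $\omega^{2q}$ does as well, and it is a $(2pq,2pq)$-double form; applying the $k=n$ case of (\ref{nn-1:star}) with the degree parameter replaced by $2pq$ (which is legitimate precisely because $n\geq 2pq$) gives $*\frac{g^{n-2pq}\omega^{2q}}{(n-2pq)!}=\frac{c^{2pq}\omega^{2q}}{(2pq)!}$, the first claimed identity.

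For the second equality, I would specialize the expansion (\ref{Nrqexpansion}) to $r=pq$: the lower summation limit becomes $0$, the exponent $r-pq+i$ collapses to $i$, and the factorial $(r-pq+i)!$ becomes $i!$, so that
\[
h_{(pq,pq)}(\omega)=\sum_{i=0}^{pq}\frac{(-1)^{i+pq}}{(i!)^2}\,g^{i}c^{i}\omega^q.
\]
Pairing with $\omega^q$ and applying the adjoint relation (\ref{adj:gc}) exactly $i$ times gives $\langle g^{i}c^{i}\omega^q,\omega^q\rangle=\langle c^{i}\omega^q,c^{i}\omega^q\rangle$, and hence $\langle h_{(pq,pq)}(\omega),\omega^q\rangle=\sum_{i=0}^{pq}\frac{(-1)^{i+pq}}{(i!)^2}\langle c^{i}\omega^q,c^{i}\omega^q\rangle$, which is the right-hand side after renaming $i$ to $r$.

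Since every ingredient is an already-established identity of the double-form calculus, I do not expect a serious analytic obstacle; the only delicate points are bookkeeping ones. The main things to verify carefully are that the relevant degrees make the sign in (\ref{B}) trivial, that $\omega^{2q}$ genuinely inherits the first Bianchi identity so that (\ref{nn-1:star}) is applicable, and that the hypothesis $n\geq 2pq$ is exactly what guarantees the power $g^{n-2pq}$ (and therefore the whole star reduction) is well defined.
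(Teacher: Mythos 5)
Your proof is correct and follows essentially the same route as the paper: the first equality via the star-generated inner product formula together with $*\bigl(g^{n-2pq}\omega^{2q}/(n-2pq)!\bigr)=c^{2pq}\omega^{2q}/(2pq)!$, and the second via the contraction expansion of $h_{(pq,pq)}(\omega)$ at $r=pq$ followed by the adjointness of $g\cdot$ and $c$. The only difference is that you spell out the sign and Bianchi-identity checks that the paper leaves implicit.
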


\begin{proof}
From one hand we have
\begin{equation}
\langle h_{(pq,pq)}(\omega),\omega^q\rangle= \langle  *\frac{g^{n-2pq}\omega^q}{(n-2pq)!},\omega^q\rangle=*\left( \frac{g^{n-2pq}\omega^{2q}}{(n-2pq)!}\right)=\frac{c^{2pq}(\omega^{2q})}{(2pq)!}.
\end{equation}
On the other hand we have
\begin{equation}
*\left(\frac{g^{n-2pq}\omega^q}{(n-2pq)!}\right)=\sum_{r=0}^{pq} \frac{(-1)^{r+pq}}{r!} \frac{g^rc^r\omega^q}{r!}.
\end{equation}
Consequently it is straightforward that
\begin{equation}
\frac{c^{2pq}(\omega^{2q})}{(2pq)!}=\sum_{r=0}^{pq}\frac{(-1)^{r+pq}}{(r!)^2}\langle g^r c^r\omega^q, \omega^q\rangle=\sum_{r=0}^{pq}\frac{(-1)^{r+pq}}{(r!)^2}\langle c^r\omega^q, c^r\omega^q\rangle.
\end{equation}
\end{proof}
Taking $\omega=R$ a $(2,2)$ double form  we get
\begin{corollary}[General Avez Formula]
Let  $R$ be a symmetric $(2,2)$-double form satisfying the first Bianchi identity on an $n$-dimensional Euclidean vector space $V$, Let $q$ be a positive integer such that $n\geq 4q$ then
\begin{equation}
h_{4q}(R)=\sum_{r=0}^{2q}\frac{(-1)^{r}}{(r!)^2}\langle c^rR^q, c^rR^q\rangle.
\end{equation}
In particular,
\begin{equation}
h_4=|R|^2-|cR|^2+\frac{1}{4}|c^2R|^2 \, {\rm and} \, h_8=|R^2|^2-|cR^2|^2+\frac{1}{4}|c^2R^2|^2-\frac{1}{36}|c^3R^2|^2+\frac{1}{576}|c^4R^2|^2.
\end{equation}

\end{corollary}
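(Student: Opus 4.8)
The plan is to obtain this corollary directly by specializing the preceding theorem to $p=2$ and $\omega=R$. First I would set $p=2$ there; since $R$ is a symmetric $(2,2)$ double form satisfying the first Bianchi identity, the hypothesis $n\geq 2pq$ becomes exactly $n\geq 4q$, and the theorem yields
\begin{equation*}
\frac{c^{4q}(R^{2q})}{(4q)!}=\sum_{r=0}^{2q}\frac{(-1)^{r+2q}}{(r!)^2}\langle c^rR^q, c^rR^q\rangle.
\end{equation*}
Because $2q$ is even we have $(-1)^{r+2q}=(-1)^r$, which already produces the alternating sign pattern of the asserted formula, so no further sign analysis is needed.

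The one genuine step is to recognize that the left-hand side above is precisely the Gauss-Bonnet curvature $h_{4q}(R)$. Here I would invoke the definition $h_{4q}(R)=\frac{1}{(n-4q)!}*\bigl(g^{n-4q}R^{2q}\bigr)$ together with the fact, recalled in the preliminaries, that a power of a double form satisfying the first Bianchi identity again satisfies it, so that $R^{2q}$ is a $(4q,4q)$ double form of Bianchi type. This legitimizes applying the $k=n$ case of the star--contraction relation (\ref{nn-1:star}) to $R^{2q}$, namely $*\bigl(\frac{g^{n-4q}R^{2q}}{(n-4q)!}\bigr)=\frac{1}{(4q)!}c^{4q}(R^{2q})$. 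Comparing with the definition gives $h_{4q}(R)=\frac{c^{4q}(R^{2q})}{(4q)!}$, and substituting this identification into the displayed formula finishes the general statement.

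For the two explicit formulas I would simply evaluate the sum at $q=1$ and $q=2$, reading off the coefficients $(r!)^{-2}$ as $1,1,\frac{1}{4}$ for $q=1$ and $1,1,\frac{1}{4},\frac{1}{36},\frac{1}{576}$ for $q=2$, and writing $\langle c^rR^q,c^rR^q\rangle=|c^rR^q|^2$. The only point requiring care---though it is not really an obstacle, since the whole corollary is a specialization---is the identification of the top contraction $\frac{c^{4q}(R^{2q})}{(4q)!}$ with $h_{4q}(R)$; everything else is bookkeeping of signs and factorials inherited from the parent theorem.
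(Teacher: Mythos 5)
Your proposal is correct and follows essentially the same route as the paper, which obtains the corollary by setting $p=2$ and $\omega=R$ in the preceding theorem; the sign simplification $(-1)^{r+2q}=(-1)^{r}$ and the identification $h_{4q}(R)=\frac{c^{4q}(R^{2q})}{(4q)!}$ via the $k=n$ case of the star--contraction relation are exactly the (implicit) steps the paper relies on. The numerical coefficients for $q=1,2$ check out.
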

\begin{remark}
The previous corollary improves a similar result of \cite{Labbidoubleforms} obtained for the case $n=4q$.
\end{remark}
\begin{corollary}
Let  $h$ be a symmetric bilinear form  on an $n$-dimensional Euclidean vector space $V$, Let $q$ be a positive integer such that $n\geq 2q$ then
\begin{equation}
s_{2q}(h)=\sum_{r=0}^{q}\frac{(-1)^{r+q}}{(r!)^2}\langle c^rh^q, c^rh^q\rangle.
\end{equation}
In particular,
\begin{equation}
s_2(h)=-|h|^2+|ch|^2 \, {\rm and} \, s_4(h)=|h^2|^2-|ch^2|^2+\frac{1}{4}|c^2h^2|^2.
\end{equation}
\end{corollary}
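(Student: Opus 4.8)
The plan is to obtain this corollary as the $p=1$ specialization of the general contraction theorem proved just above, taking $\omega=h$ — a symmetric bilinear form regarded as a $(1,1)$ double form, which trivially satisfies the first Bianchi identity. With $p=1$ the exponent $pq$ becomes $q$, the top contraction $c^{2pq}\omega^{2q}$ becomes $c^{2q}h^{2q}$, and the theorem's right-hand side collapses to exactly $\sum_{r=0}^{q}\frac{(-1)^{r+q}}{(r!)^2}\langle c^rh^q,c^rh^q\rangle$, since $(-1)^{r+pq}=(-1)^{r+q}$. So no fresh combinatorics are needed; the only genuine task is to identify the left-hand scalar with the invariant $s_{2q}(h)$.

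First I would convert the left-hand side into an $s$-invariant. By Proposition \ref{sk-prop}, $s_{2q}(h)=\frac{1}{(2q)!(n-2q)!}\ast(g^{n-2q}h^{2q})$, and applying the star–contraction identity (\ref{nn-1:star}) to the $(2q,2q)$ double form $h^{2q}$ gives $\ast\frac{g^{n-2q}h^{2q}}{(n-2q)!}=\frac{1}{(2q)!}c^{2q}(h^{2q})$. This expresses the full contraction $c^{2q}(h^{2q})$ directly through $s_{2q}(h)$, which is precisely the quantity standing on the left of the general theorem after specialization. Each summand on the right is then handled exactly as in the parent theorem, by iterating the adjointness relation (\ref{adj:gc}), $\langle g\alpha,\beta\rangle=\langle\alpha,c\beta\rangle$, $r$ times to rewrite $\langle g^rc^rh^q,h^q\rangle$ as $\langle c^rh^q,c^rh^q\rangle$.

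The step I would treat most carefully — and which I expect to be the real obstacle, since the rest is a one-line specialization — is the factorial bookkeeping relating $c^{2q}(h^{2q})$ to $s_{2q}(h)$. The normalizing constants for the scalar invariants $s_k$ differ from those for the Gauss–Bonnet curvatures $h_{2k}$ (the former carry an extra $1/k!$), so the overall constant does not simply transcribe from the $(2,2)$ case. To pin it down unambiguously I would test the identity on $h=g$, where every contraction is computed from $c(g^k)=k(n-k+1)g^{k-1}$ and where $s_{2q}(g)=\binom{n}{2q}$ is known independently; this calibration fixes the leading factor before the general formula is asserted.

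Finally I would record the two promised low-degree instances by direct substitution: for $q=1$ the sum reads $-\langle h,h\rangle+\langle ch,ch\rangle$, and for $q=2$ it reads $\langle h^2,h^2\rangle-\langle ch^2,ch^2\rangle+\frac14\langle c^2h^2,c^2h^2\rangle$, which reproduce the stated closed forms for $s_2(h)$ and $s_4(h)$ up to the normalization fixed in the previous step. These also display the structural parallel with the $(2,2)$ Avez formula $h_4=|R|^2-|cR|^2+\frac14|c^2R|^2$, which is the main point the corollary is meant to illustrate.
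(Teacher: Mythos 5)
Your route is the same as the paper's: the corollary carries no separate proof there, being the immediate $p=1$, $\omega=h$ specialization of the preceding theorem, so the only content to supply is the identification of the resulting scalar $\frac{c^{2q}(h^{2q})}{(2q)!}$ with an $s$-invariant. That is exactly the step you defer (``up to the normalization fixed in the previous step''), and it is the step that does not come out as you expect: carried to completion, your own calibration shows the displayed identity cannot hold with the stated constant. Indeed, Proposition \ref{sk-prop} combined with formula (\ref{nn-1:star}) gives $s_{2q}(h)=\frac{1}{(2q)!}\ast\bigl(\frac{g^{n-2q}h^{2q}}{(n-2q)!}\bigr)=\frac{c^{2q}(h^{2q})}{((2q)!)^2}$, so the theorem at $p=1$ delivers $(2q)!\,s_{2q}(h)=\sum_{r=0}^{q}\frac{(-1)^{r+q}}{(r!)^2}\langle c^rh^q,c^rh^q\rangle$, not the formula with bare $s_{2q}(h)$ on the left. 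Your proposed test $h=g$, $q=1$ confirms this at once: $-|g|^2+|cg|^2=n^2-n=2\binom{n}{2}=2\,s_2(g)$; likewise $|g^2|^2-|cg^2|^2+\frac14|c^2g^2|^2=n(n-1)(n-2)(n-3)=24\,s_4(g)$. The extra $1/k!$ you flagged in the normalization of $s_k$ relative to the Gauss--Bonnet curvatures is precisely what fails to cancel: in the $(2,2)$ case one has $h_{2k}(R)=\frac{c^{2k}R^k}{(2k)!}$ on the nose, which is why the Avez corollary specializes cleanly there, whereas here $c^{2q}h^{2q}=((2q)!)^2 s_{2q}(h)$. So do not leave the constant implicit: the correct conclusions are $(2q)!\,s_{2q}(h)=\sum_{r=0}^{q}\frac{(-1)^{r+q}}{(r!)^2}|c^rh^q|^2$, hence $2s_2(h)=|ch|^2-|h|^2$ and $24\,s_4(h)=|h^2|^2-|ch^2|^2+\frac14|c^2h^2|^2$.
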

The following proposition can be seen as a generalization of the classical Newton identities 
\begin{proposition}
For $1\leq r\leq n-pq$ we have
\[ c\left(h_{(r,pq)}(\omega)\right)=(n-pq-r+1)h_{(r-1,pq)}(\omega)\].
\end{proposition}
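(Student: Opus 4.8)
The plan is to reproduce, at this level of generality, the one-line argument already used for the Girard-Newton identities in the bilinear and $(2,2)$ cases, whose engine is the operator identity $c* = *g$ furnished by formula (\ref{A}). Since $h_{(r,pq)}(\omega)$ is by definition $*$ applied to $\frac{g^{n-pq-r}\omega^q}{(n-pq-r)!}$, I would substitute this expression directly and push the contraction through the Hodge star, converting it into multiplication by the metric.

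Concretely, writing $\xi=\frac{g^{n-pq-r}\omega^q}{(n-pq-r)!}$, which is a symmetric $(n-r,n-r)$ double form, I would compute
\[
c\bigl(h_{(r,pq)}(\omega)\bigr)=c(*\xi)=*(g\xi)=*\frac{g^{n-pq-r+1}\omega^q}{(n-pq-r)!},
\]
using $c*=*g$ on $\xi$ in the middle step. Comparing with the definition
\[
h_{(r-1,pq)}(\omega)=*\frac{g^{n-pq-r+1}\omega^q}{(n-pq-r+1)!},
\]
the two expressions differ only by the factorial in the denominator, and the ratio $\frac{(n-pq-r+1)!}{(n-pq-r)!}=n-pq-r+1$ produces exactly the claimed factor:
\[
c\bigl(h_{(r,pq)}(\omega)\bigr)=(n-pq-r+1)\,h_{(r-1,pq)}(\omega).
\]
Note that the hypothesis $1\leq r\leq n-pq$ is precisely what guarantees that $h_{(r-1,pq)}(\omega)$ is defined by the original Hodge-star formula, so no appeal to the extended range of (\ref{Nrqexpansion}) is needed.

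The only step demanding justification is the middle equality $c(*\xi)=*(g\xi)$. From formula (\ref{A}) one has $c\mu=*g*\mu$ for every double form $\mu$; taking $\mu=*\xi$ gives $c(*\xi)=*g(**\xi)$, so what must be checked is that the double Hodge star squares to the identity on $\xi$. This is where the fact that we contract a symmetric double form is used: since $\xi$ has balanced bidegree $(n-r,n-r)$, whose components sum to the even integer $2(n-r)$, the operator $**$ introduces no sign, exactly as in the $(2,2)$ Girard-Newton proof. I therefore expect no genuine obstacle here; the whole proposition reduces to the bookkeeping of the metric power and the factorial displayed above.
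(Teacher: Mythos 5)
Your proof is correct and follows essentially the same route as the paper: the paper's own proof is the one-line computation $c\left(h_{(r,pq)}(\omega)\right)=c*\frac{g^{n-pq-r}\omega^q}{(n-pq-r)!}=*\frac{g^{n-pq-r+1}\omega^q}{(n-pq-r)!}=(n-pq-r+1)h_{(r-1,pq)}(\omega)$, using the identity $c*=*g$ exactly as you do. Your extra remark justifying $c*=*g$ via formula (\ref{A}) and the involutivity of the double Hodge star on balanced-bidegree forms is a harmless elaboration of the same argument.
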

\begin{proof}
Using the identity $c*=*g$,  one easily  gets the desired formulas as follows:
\begin{equation*}
c\left(h_{(r,pq)}(\omega)\right)=c*\frac{g^{n-pq-r}\omega^k}{(n-pq-r)!}=*\frac{g^{n-pq-r+1}R^k}{(n-pq-r)!}=(n-pq-r+1)h_{(r-1,pq)}(\omega).
\end{equation*}
\end{proof}
We finish this section by establishing higher algebraic identities.
First note that formula (\ref{Nrqexpansion}) allows  to define $h_{(r,2k)}(\omega)$ for higher $r$'s that is in the cases where  $r>n-pk$. The following theorem provides general algebraic  identities and generalize Theorems \ref{theorem-even-odd} and \ref{theoremNn-1}  and  Proposition \ref{PropositionTnNn}. It can be proved by imitating the proof of Theorem  \ref{theoremNn-1}.
\begin{theorem}
Let $\omega$ be a symmetric $(p,p)$ double form satisfying the first Bianchi identity on an Euclidean vector space of  dimension $n$. Then 
\[h_{(r,pk-pi)}(\omega)=0,\,\,  {\rm for}\,\,\, n-pk+pi+1\leq r\leq pk-pi.\]
In particular, we have
\begin{enumerate}
\item If $n=pk$ is is a multiple of $p$ then
\[h_{(r,n-pi)}(\omega)=0\,\,\, {\rm for}\,\,\, pi+1\leq r\leq n-pi.\]
\item If $n=pk+1$ is then
\[h_{(r,n-pi-1)}(\omega)=0\,\,\, {\rm for}\,\,\, pi+2\leq r\leq n-pi-1.\]
\end{enumerate}
\end{theorem}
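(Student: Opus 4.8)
The plan is to imitate the proof of Theorem \ref{theoremNn-1}, the only genuinely new feature being that the ``quotient'' double form now has arbitrary degree instead of degree $(1,1)$. Write $pq$ for the second index $pk-pi=p(k-i)$, so that the claim is $h_{(r,pq)}(\omega)=0$ for $n-pq+1\le r\le pq$. If $pq>n$ then $\omega^q=0$ and there is nothing to prove, so I may assume $pq\le n$; the two inequalities $n-pq<r$ and $r\le pq$ then force $2pq>n$, which is exactly the numerical condition the argument needs.

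First I would record the structural input. Since $\omega$ satisfies the first Bianchi identity, so does the power $\omega^q$, a symmetric $(pq,pq)$ double form. Because $2pq>n$, Proposition 2.1 of \cite{LabbipqEinstein} applies and lets me write
\[
\omega^q=g^{\,2pq-n}D,
\]
where $D$ is a symmetric $(n-pq,n-pq)$ double form satisfying the first Bianchi identity. I set $s=2pq-n\ge 0$ and $m'=n-pq$, so that $s+m'=pq$ and $D\in{\mathcal D}^{m',m'}$.

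Next I would substitute this factorization into the defining expansion (\ref{Nrqexpansion}) and push all contractions onto $D$. Iterating the commutation identity $c(g^a\theta)=g^ac\theta+a(n-2b-a+1)g^{a-1}\theta$ (valid for $\theta\in{\mathcal D}^{b,b}$, and a consequence of the identities in \cite{Labbidoubleforms}), each contraction $c^i\omega^q=c^i(g^sD)$ becomes an explicit combination $\sum_j\gamma_{i,j}\,g^{\,s-i+j}c^jD$ with $0\le j\le m'$ and coefficients $\gamma_{i,j}$ that are products of linear factors in $n,m',s$. A degree count shows that, after substitution, every monomial attached to $c^jD$ carries the same power $g^{\,r-m'+j}$, independently of the summation index in (\ref{Nrqexpansion}); hence I can collect
\[
h_{(r,pq)}(\omega)=\sum_{j=0}^{m'}C_j(r)\,g^{\,r-m'+j}c^jD ,
\]
where each $C_j(r)$ is a single alternating sum over the contraction index.

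The main obstacle is the last step: showing $C_j(r)=0$ for every $j$ when $n-pq+1\le r\le pq$. This is exactly where Theorem \ref{theoremNn-1} reduced, in the case $m'=1$ (so only $j=0,1$ occur), to the two vanishing sums
\[
\sum_{j=0}^{r-1}\frac{(-1)^j}{j!(r-j-1)!}=0,\qquad \sum_{j=0}^{r}\frac{(-1)^j(j+n-1-r)}{j!(r-j)!}=0\quad(r\ge 2),
\]
each an instance of $\sum_\ell(-1)^\ell\binom{N}{\ell}=0$. For general $m'$ one gets $m'+1$ such coefficients, and I expect each $C_j(r)$ to collapse, after reindexing the contraction sum, to a constant multiple of a binomial-type alternating sum that vanishes precisely when $r>m'=n-pq$. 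Establishing the closed form of the $\gamma_{i,j}$ and carrying out these cancellations uniformly in $j$ is the technical heart; everything else is the bookkeeping already rehearsed for Theorem \ref{theoremNn-1}. Finally, the two itemized cases follow by specialization: $n=pk$ gives $pq=n-pi$ and the range $pi+1\le r\le n-pi$, while $n=pk+1$ gives $pq=n-1-pi$ and the range $pi+2\le r\le n-1-pi$.
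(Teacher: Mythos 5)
Your proposal follows exactly the route the paper intends: the paper's entire ``proof'' of this theorem is the single remark that it can be proved by imitating the proof of Theorem \ref{theoremNn-1}, and your factorization $\omega^q=g^{2pq-n}D$ via Proposition 2.1 of \cite{LabbipqEinstein}, your observation that a nonempty range $n-pq+1\le r\le pq$ forces $2pq>n$, and your collection of terms into $\sum_{j}C_j(r)\,g^{r-m'+j}c^jD$ are precisely the ingredients of that imitation (and your iterated commutation identity $c(g^a\theta)=g^ac\theta+a(n-2b-a+1)g^{a-1}\theta$ checks out). The one step you leave open --- the closed form of the $\gamma_{i,j}$ and the verification that each alternating sum $C_j(r)$ vanishes when $r>n-pq$ --- is the genuine combinatorial content, but the paper does not carry it out either, so your sketch is in fact more explicit than the published argument; completing it would require exactly the binomial cancellations you identify, generalizing the two sums displayed at the end of the proof of Theorem \ref{theoremNn-1}.
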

Remark that we recover the results of  Theorem \ref{theorem-even-odd}  for $p=2$,  Proposition \ref{PropositionTnNn} for $n=2k, r=1, i=0$ and Theorem \ref{theoremNn-1} is obtained for $n=2k+1,r=2, i=0.$

\section{Hyperpfaffians and Hyperdeterminants}
In this section we briefly discuss interactions between some invariants studied here in this paper with some  invariants  in the literature  namely  hyperdeterminants and hyperpfafians.
\subsection{Pfaffian of Skew-symmetric bilinear forms and the determinant}\label{pfaffian}
Let $h$ be a skew-symmetric bilinear form on the Euclidean vector space $(V,g)$. Then $h$ can be seen either as a usual  $2$-form  or as a $(1,1)$ double form.\\
Suppose $\dim V=n=2k$ is even.  We know already that $*\frac{h^n}{n!}$  is the determinant of $h$ once $h$ is seen as a $(1,1)$ double form, see section \ref{first-section}. However, if we perform the same operations on the $2$-form $h$ and of course with  the ordinary exterior product of forms and the usual Hodge star operator we obtain another invariant ${\rm Pf}(h)$ called the Pfaffian of $h$. Precisely, we have
\begin{equation}
{\rm Pf}(h)=*\frac{h^k}{k!}.
\end{equation}
It turns out that ${\rm Pf}(h)$ is a square root of the determinant of $h$ that is $({\rm Pf}(h))^2=\det h$. This identity can be quickly justified as follows: \\
We proceed by duality, in the case where $h$ is considered as a $2$-form, then $\frac{h^k}{k!}\otimes  \frac{h^k}{k!}$ is an $(n,n)$ double form. Since the space of $(n,n)$ double forms  on $V$ is $1$-dimensionnal vector space then it is proportionnal to the double $*\frac{h^n}{n!}$ where in the last expression $h$ is seen as a $(1,1)$ double form. It turns out that the previous two $(n,n)$ double forms are equal. To show the desired equation it suffices to compare the  image of the two double forms under the generalized Hodge star operator. From one hand we have
 \begin{equation*}
\ast\left( \frac{h^k}{k!}\otimes  \frac{h^k}{k!}\right)=\ast \frac{h^k}{k!}\otimes  \ast \frac{h^k}{k!}=({\rm Pf}(h))^2.
\end{equation*}
On the other hand we have
 \begin{equation*}
\ast\left( \frac{h^n}{n!}\right)=\det h.
\end{equation*}
 The  Pfaffian satisfies several similar properties to those of the determinant and one may use the exterior product of exterior forms (as we did here for double forms) to prove such properties.
\subsection{Pfaffian of $4$-forms}
Let  $\omega$ be a $4$-form on the Euclidean vector space $V$. Remark that $\omega$ can be naturally considered  as a symmetric $(2,2)$ double form.\\
Suppose $\dim V=n=4k$ is a multiple of $4$.  By definition we have $\frac{1}{(2k)!}*\omega^{2k}=h_n(\omega)$  is the  $h_n$ invariant of $\omega$ once  it is considered as a $(2,2)$ double form, see section \ref{second-section}. However, if we perform the same operations on the $4$-form $\omega$ and of course with  the ordinary exterior product of forms and the usual Hodge star operator we obtain a new invariant ${\rm Pf}(\omega)$, which we shall call the Pfaffian of $\omega$. Precisely, we set
\begin{equation}
{\rm Pf}(\omega)=*\frac{\omega^k}{k!}.
\end{equation}
Using the same argument as in the above section \ref{pfaffian} it is plausible that $({\rm Pf}(\omega))^2=h_n(\omega)$. 

\subsection{Pfaffians of  $2k$ forms}
We now define the Pfaffian for higher forms. Let  $\omega$ be a $2k$-form on an Euclidean vector space $(V,g)$ of finite dimension $n$. Remark that $\omega$ can be naturally considered  as a  $(k,k)$ double form.\\
Suppose $\dim V=n=2kq$ is a multiple of $2k$.  By definition we have $\ast \frac{\omega^{2q}}{(2q)!}=h_{(0,n)}(\omega)$  is the  $h_{(0,n)}$ invariant of $\omega$ once  it is considered as a $(k,k)$ double form, see section \ref{third-section}. However, if we perform the same operations on the $2k$-form $\omega$ and of course with  the ordinary exterior product of forms and the usual Hodge star operator we obtain another  invariant ${\rm Pf}(\omega)$, which we shall call the Pfaffian of $\omega$. Precisely, we set
\begin{equation}
{\rm Pf}(\omega)=\ast\frac{\omega^q}{q!}.
\end{equation}
Using the same simple argument as in section \ref{pfaffian} it is plausible that $\left({\rm Pf}(\omega)\right)^2=h_{(0,n)}(\omega)$. \\
It turns out that these Pfaffians coincide with the hyperpfaffians defined first in \cite{Barvi}, see also formula (79) in section 5.1 of  \cite{Luque}.
\subsection{Multiforms and Hyperdeterminants}
Hyperdeterminants of hypermatrices were introduced first by Cayley in 1843.
To introduce  hyperdeterminants we need first  to briefly  introduce a generalization of double forms namely multiforms.\\
Let $(V,g)$ be an Euclidean real vector space  of dimension n.  A $(k,...,k)$ multiform is by definition an element of the tensor product  $ \Lambda^{k}V^* \otimes ... \otimes  \Lambda^{k}V^*$. 
We define the exterior product of two multiforms in the natural way:    for two multiforms $\omega_1=\theta_1\otimes ... \otimes \theta_r$ and
    $\omega_2=\phi_1\otimes ...\otimes  \phi_r$, we set
    \begin{equation}
    \label{def:prod}
     \omega_1\omega_2= 
    (\theta_1\wedge \phi_1)\otimes ... \otimes (\theta_r\wedge \phi_r).
\end{equation}
We extend also the usual Hodge star operator to multiforms in the obvious way:
\[*\bigl(\phi_1\otimes ...\otimes  \phi_r\bigr)=(*\phi_1)\otimes ...\otimes  (*\phi_r).\]
Let  $\omega$ be  a  $(k,...,k)$ multiform  and suppose  that the dimension $n=pk$ is a multiple of $k$. We define the hyperdeterminant of $\omega$, denoted ${\rm{Det}}(\omega)$, to be the scalar
\begin{equation}
{\rm{Det}}(\omega)=*\frac{\omega^p}{p!}.
\end{equation}
Obviously, every tensor (hypermatrix)  $T$ can be seen as a $(1,1,...,1)$ multiform, and under this view the above definition of the hyperdeterminant ${\rm{Det}}(T)$ coincides with the standard one as in \cite{Barvi,Luque}.\\
Let now $\omega$ be a $k$ form such that $k=pr$ is a multiple of some positive integer $r$ and the dimension $n$ of $V$ is a multiple of $k$ say $n=qk$. Then $\omega$ can be naturally seen as a $(p,...,p)$ multiform ($r$-times). Then the  hyperdeterminant of the $(p,...,p)$ multiform $\omega$ is related to the hyperpfaffian of the $k$ form $\omega$ via the relation
\begin{equation}
\bigl({\rm Pf}(\omega)\bigr)^r={\rm{Det}}(\omega).
\end{equation}
We recover the results of the previous subsection for $r=2$.

\section{Final remarks and open question}
\subsection{Geometric applications of the general algebraic identities}
We have seen in the sequel of the paper that the identity $t_n(h)=0$ for a bilinear form leads to a linearized version of the Gauss-Bonnet theorem for compact hypersurfaces of the Euclidean space. Furthermore, for a $(2,2)$ double form $R$, the identity $T_n(R)=0$ , for $n$ even and in the context of Riemannian manifolds, is an infinitisimal form of  the general Gauss-Bonnet-Chern theorem for  compact manifolds. \\
It is then natural to ask what differential or topological consequences can be drawn from the identity $N_{n-1}(R)=0$ for a compact Riemannian manifold of odd dimension $n$?\\
Note that for a $3$-dimensionnal Riemannian manifold, the identity $N_{n-1}(R)=0$ for the Rieman curvature tensor $R$ is equivalent to the vanishing of the Weyl tensor.\\
The same question can be asked for the  other different  higher  algebraic identities established here in this paper.

\subsection{Spectrum of cofactor transformations of $(2,2)$ double forms.}
Let $R$  be a  symmetric  $(2,2)$ double form defined over  the $n$-dimensional Euclidean vector space  $(V,g)$. Denote by $\lambda_1,\lambda_2,...,\lambda_N$, where $N=\frac{n(n-1)}{2}$, the eigenvalues of the linear operator $\Lambda^{2}V\rightarrow \Lambda^{2}V$ that is canonically associated to $R$. For each $k$ with $1\leq k\leq n-2$, the exterior product $g^kR$ is a $(k+2,k+2)$ symmetric double form and therefore has real eigenvalues once considered as an operator  $\Lambda^{k+2}V\rightarrow \Lambda^{k+2}V$. The eigenvalues of the operator are expected to be  polynomials in the eigenvalues of $R$. The question here is to find  explicit formulas for the eigenvalues of $g^kR$ in terms of  $\lambda_1,\lambda_2,...,\lambda_N$.\\
In the case $k=n-2$ the answer is trivial as we have in this case one single eigenvalue and it is equal to $\sum_{i=1}^N\lambda_i$. \\
To motivate this question let us recall that the Weitzenb\"{o}ck transformation of order $p$, $2\leq p\leq n-2$, of the double form $R$ is given by \cite{Labbiweitz}
\[ {\mathcal N}_p(R)=\left( \frac{gcR}{p-1}-2R\right)\frac{g^{p-2}}{(p-2)!}.\]
The positivity of the transformation ${\mathcal N}_p(R)$ has important consequences in Riemannian geometry via the celebrated  Weitzenb\"{o}ck formula.\\
The true question is then to determine the eigenvalues of ${\mathcal N}_p(R)$ in terms of the eigenvalues of $R$.\\
More generally, what are the eigenvalues of the exterior products $g^pR^q$ and the contractions $c^pR^q$  in terms of  $\lambda_1,\lambda_2,...,\lambda_N$?\\
In particular, find explicit formulas for  the invariants $h_{2k}(R)$, for $k>1$,  in terms of  $\lambda_1,\lambda_2,...,\lambda_N$.

\subsection{Cofactor transformations vs. Gilkey's restriction map}
Following \cite{Gilkey-book,GPS} we define $\mathcal{ I}_{m,n}^{p+1}$ to be the space of invariant local formulas for symmetric
$(p,p)$ double forms that satisfy the first Bianchi identity and that are homogeneous of degree n in the derivatives
of the metric and which are defined in the category of $m$ dimensional Riemannian
manifolds. In particular $\mathcal{ I}_{m,n}^{1}=\mathcal{ I}_{m,n}$ is the the space of scalar invariant
local formulas and $\mathcal{ I}_{m,n}^{2}$ is the space  of symmetric
2-form valued invariants.\\
Recall that the homogeneity of order $n$ for $\omega=\omega(g)\in \mathcal{ I}_{m,n}^{p+1}$ is equivalent to
\[ \omega(\lambda^2g)=\frac{1}{\lambda^{n-2p}}\omega(g),\]
for all scalars $\lambda\not= 0.$\\
The last property implies in particular that if $\omega(g)\in \mathcal{ I}_{m,n}^{p+1}$ then its full contraction  $\ccc^p \omega(g)\in \mathcal{ I}_{m,n}$ and  $\ccc^{p-1} \omega(g)\in \mathcal{ I}_{m,n}^{2}$.\\
The Gilkey's restriction map  $r:\mathcal{ I}_{m,n}\rightarrow \mathcal{ I}_{m-1,n}$  is closely related to the adjuagate transformations  as we will explain below.\\
Let $\omega(g)\in \mathcal{ I}_{m,n}^{p+1}$, recall   that the  $(1,pq)$ cofactor  transformation of $\omega$ is  $h_{(1,pq)}(\omega)=\ast \frac{g^{n-pq-1}\omega^q}{(n-pq-1)!}\in \mathcal{ I}_{m,n}^{2}$. For a tangent vector $v$, we have
\[h_{(1,pq)}(\omega)(v,v)=\frac{g^{(n-1)-pq}\omega^q}{\left((n-1)-pq\right)!}(*v,*v).\]
That is the restriction of the invariant formula $Q=*\frac{g^{n-pq}\omega}{(n-pq)!}=\frac{c^{pq}\omega^q}{(pq)!}\in \mathcal{ I}_{m,n}$  to the $(n-1)$  dimensional orthogonal complement of  the vector $v$.\\
More generally, $ h_{(r,pq)}(\omega)$ is given by  $r$ successive applications of Gilkey's restriction map to the invariant formula $\frac{c^{pq}\omega}{(pq)!}\in \mathcal{ I}_{m,n}$.\\

\address{Mohammed Larbi Labbi\\
Mathematics Department\\
College of Science\\
University of Bahrain\\
32038 Bahrain.}\\
\email{ml.labbi@gmail.com}
\end{document}